%============================================================== Formatting.
\documentclass[reqno,11pt]{amsart}
\usepackage{amsmath,amssymb,amsthm,graphicx,a4wide,enumerate,url}
\usepackage[small,bf]{caption}
\setlength{\captionmargin}{0pt}
%======================================================= Layout parameters.
% text width and height
\setlength{\textwidth}{6.15in}
\setlength{\textheight}{8.6in}
% center horizontally with margins of 1.25in left and right
\setlength{\oddsidemargin}{0.5\paperwidth}
\addtolength{\oddsidemargin}{-0.5\textwidth}
\addtolength{\oddsidemargin}{-1in}
\setlength{\evensidemargin}{\oddsidemargin}
% vertical adjustment with margins 1in top and bottom
\setlength{\topmargin}{-0.1in}
% head and foot separations
\setlength{\headsep}{.2in}
\setlength{\footskip}{.3in}
%========================================================== Theorem macros.
\theoremstyle{plain}
\newtheorem{thm}{Theorem}
\newtheorem{lemma}[thm]{Lemma}

\theoremstyle{definition}

\theoremstyle{remark}
\newtheorem{rem}{Remark}

%================================================================== Macros.
\newcommand{\prn}[1]{\left(#1\right)}

\newcommand{\BRK}[1]{\left<#1\right>}

\newcommand{\ud}[1]{\,\mathrm{d}#1}
\renewcommand{\vec}[1]{\mathbf{#1}}
%=================================================== New cases environment.
\makeatletter
\renewcommand{\env@cases}[1][@{}l@{\quad}l@{}]{%
  \let\@ifnextchar\new@ifnextchar
  \left\lbrace
  \def\arraystretch{1.2}%
  \array{#1}%
}
\makeatother
%===========================================================================

\begin{document}
\parskip.9ex

%===========================================================================
%=================================================================== Titles.
\title[Meshfree FD for Vector Poisson Equations with Electric Boundary Conditions]
{Meshfree Finite Differences for Vector Poisson and Pressure Poisson Equations with Electric Boundary Conditions}

\author[D. Zhou]{Dong Zhou}
\address[Dong Zhou]
{Department of Mathematics \\ Temple University \\ \newline
1805 North Broad Street \\ Philadelphia, PA 19122}
\email{dong.zhou@temple.edu}
\urladdr{http://www.math.temple.edu/\~{}dzhou}

\author[B. Seibold]{Benjamin Seibold}
\address[Benjamin Seibold]
{Department of Mathematics \\ Temple University \\ \newline
1805 North Broad Street \\ Philadelphia, PA 19122}
\email{seibold@temple.edu}
\urladdr{http://www.math.temple.edu/\~{}seibold}

\author[D. Shirokoff]{David Shirokoff}
\address[David Shirokoff]
{Department of Mathematics and Statistics \\ McGill University \\
\newline 805 Sherbrooke Street West \\ Montreal, Quebec \\ Canada, H3A 0B9}
\email{david.shirokoff@mail.mcgill.ca}
\urladdr{http://www.math.mcgill.ca/dshirokoff}

\author[P. Chidyagwai]{Prince Chidyagwai}
\address[Prince Chidyagwai]
{Department of Mathematics and Statistics \\ Loyola University Maryland \\ \newline
4501 N. Charles Street \\ Baltimore, MD 21210}
\email{pchidyagwai@loyola.edu}
\urladdr{http://math.loyola.edu/\~{}chidyagp}

\author{Rodolfo Ruben Rosales}
\address[Rodolfo Ruben Rosales]
{Department of Mathematics \\ Massachusetts Institute of Technology \\
\newline 77 Massachusetts Avenue \\ Cambridge, MA 02139}
\email{rrr@math.mit.edu}

\subjclass[2000]{65M06; 65N06; 76M20; 35Q35}

%35K05   Heat equation
%35M10   Equations of mixed type
%35Q30   Navier-Stokes equations [See also 76D05, 76D07, 76N10]
%35Q35   PDEs in connection with fluid mechanics
%35Q60   PDEs in connection with optics and electromagnetic theory

%65D25   Numerical differentiation
%65M06   Finite difference methods
%65M12   Stability and convergence of numerical methods
%65M50   Mesh generation and refinement (IBVP)
%65N06   Finite difference methods (BVP)

%76M20   Finite difference methods (fluid mechanics)
%76M25   Other numerical methods (fluid mechanics)
%76D05   Navier-Stokes equations [See also 35Q30]

\keywords{meshfree, finite-differences, Navier-Stokes, incompressible, vector Poisson equation, pressure Poisson equation, reformulation, manufactured solution, high-order}
%===========================================================================
\begin{abstract}
We demonstrate how meshfree finite difference methods can be applied to solve vector Poisson problems with electric boundary conditions. In these, the tangential velocity and the incompressibility of the vector field are prescribed at the boundary. Even on irregular domains with only convex corners, canonical nodal-based finite elements may converge to the wrong solution due to a version of the Babu{\v s}ka paradox. In turn, straightforward meshfree finite differences converge to the true solution, and even high-order accuracy can be achieved in a simple fashion. The methodology is then extended to a specific pressure Poisson equation reformulation of the Navier-Stokes equations that possesses the same type of boundary conditions. The resulting numerical approach is second order accurate and allows for a simple switching between an explicit and implicit treatment of the viscosity terms.
\end{abstract}
%===========================================================================

\maketitle

%===========================================================================
\section{Introduction}
\label{sec:introduction}
%===========================================================================
The numerical approximation of vector fields that are incompressible is often times challenging because incompressibility, $\nabla\cdot\vec{u} = 0$, is a global constraint that may not fit within the framework of simple discretization approaches of the complete problem. The instationary incompressible Navier-Stokes equations (NSE) represent a prime example, in which the time-evolution of the velocity field is given, but not of the pressure (which is a Lagrange multiplier associated with $\nabla\cdot\vec{u} = 0$). As a consequence, there is no single canonical way to advance the NSE forward in time. Similarly, in electrostatics the electric field and the magnetic potential are solutions to vector Poisson equations with divergence constraints.

One methodology to circumvent the incompressibility constraint inside the computational domain is to formulate a different problem that imposes $\nabla\cdot\vec{u} = 0$ as a boundary condition instead. Under certain circumstances, this new problem has the same solution as the original problem, while at the same time giving rise to new numerical approximation methods. For electrostatic problems (see \S\ref{sec:vector_poisson_equation}) this approach is employed and analyzed in \cite{Mayergoyz1993, JiangWuPovinelli1996}, and in the context of incompressible fluid flows (see \S\ref{sec:PPE_reformulation}) it has been proposed in \cite{ShirokoffRosales2010}. The specific boundary conditions for these problems consist of enforcing the tangential component(s) of the solution, together with the condition $\nabla\cdot\vec{u} = 0$. Due to their occurrence in electrostatics, they are often called (perfect) \emph{electric boundary conditions} (EBC), a terminology that we follow in this paper.

A fundamental question is what are simple numerical approaches to approximate the solutions to vector-valued problems with EBC. In \cite{ShirokoffRosales2010} an immersed boundary staggered grid approach has been proposed. While convergent, this approach is definitely not a simple or canonical method. A seemingly more natural approach for problems on irregular domains is the standard nodal finite element method (FEM). Interestingly, for the class of problems at hand, nodal FEM can exhibit a Babu\u{s}ka paradox, i.e., for domains with curved boundaries the sequence of FEM approximations can converge to a wrong solution (see \S\ref{subsec:nodal_FEM}). Domains with re-entrant corners (see \S\ref{subsec:EBC}) pose additional challenges; however, this last aspect is not the focus of this paper. The convergence of the FEM can be recovered by converting to a mixed FEM formulation, as conducted in \cite{ChidyagwaiRosalesSeiboldShirokoffZhou2013}. However, the simplicity of nodal FEM does not carry over to mixed FEM.

In this paper, we present yet another alternative for problems with EBC, namely meshfree finite differences (FD). These are generalizations of traditional grid-based FD that apply to clouds of points without any connectivity between them. The meshfree FD methodology is presented in \S\ref{subsec:VPE_meshfree_FD} and \S\ref{subsec:point_cloud_generation}, and its application to the vector Poisson equation with EBC is shown in \S\ref{subsec:VPE_numerical_results}. Structurally, the approximation of a general PDE boundary value problem via meshfree FD is very straightforward: any differential operator, whether in the domain's interior or on the boundary, is approximated via a meshfree FD stencil. Thus, the problem is directly transformed into a finite dimensional system, in which each individual equation corresponds to the governing condition that holds at a particular point of the cloud.

We demonstrate (in \S\ref{subsec:VPE_numerical_results}) that for the vector Poisson equation with EBC, meshfree finite differences do not exhibit the Babu\u{s}ka paradox, and furthermore that there is no conceptual problem to obtain higher-order accuracy (we test the method up to third order convergence).

We then move on to time-dependent problems. First, the meshfree FD method is extended to the vector heat equation with EBC (see \S\ref{subsec:VHE_meshfree_FD}), where an explicit or an implicit time-stepping can be conducted. Then, by adding nonlinear convective terms and a pressure, the approach is further extended to the incompressible Navier-Stokes equations (NSE). Specifically, we consider a pressure Poisson equation (PPE) reformulation of the Navier-Stokes equations. The idea of PPE reformulations (see \S\ref{sec:PPE_reformulation}) is that an operator function $p = P(\vec{u})$ is formulated that yields (via the solution of a Poisson equation) the pressure $p$ to any given velocity field $\vec{u}$ that solves the NSE. Here, we focus on a specific PPE reformulation, proposed in \cite{ShirokoffRosales2010}, which prescribes EBC for the fluid velocity, the motivation for which is outlined in \S\ref{subsec:PPE_EBC}. We demonstrate how a meshfree FD approximation for the full PPE reformulation can be constructed (see \S\ref{subsec:PPE_meshfree_FD}), and show computational results for a resulting numerical scheme that is second order accurate in space and time, and that allows for a choice of an explicit or an implicit treatment of the viscosity (see \S\ref{subsec:PPE_numerical_results}).

%===========================================================================
\vspace{1.5em}
\section{Vector Poisson Equation}
\label{sec:vector_poisson_equation}
%===========================================================================
The vector Poisson equation (VPE) arises, for instance, in problems in electrostatics. The electric field satisfies $\nabla\cdot\vec{E} = \rho$, where $\rho = \rho(x)$ is the (normalized) charge density. Using the fact that $\nabla\times\vec{E} = 0$, this implies the VPE $\Delta\vec{E} = \nabla\rho$. Moreover, if the boundaries of the domain are perfect conductors, then the vector field is perpendicular to the boundary, i.e., $\vec{n}\times\vec{E} = 0$, where $\vec{n}$ is the outer surface normal vector. Another example is the magnetic potential, which satisfies the VPE $\Delta\vec{A} = -\vec{J}$, where $\vec{J} = \vec{J}(x)$ is the (normalized) electric current density. The Coulomb gauge yields $\nabla\cdot\vec{A} = 0$, and the boundary condition $\vec{n}\times\vec{A} = 0$ represents a zero magnetic magnetic field (see \cite{ElDabaghiPironneau1986} for more details). Motivated by the structure of these examples, we here consider the VPE
\begin{equation}
\label{eq:VPE_original}
\begin{cases}[r@{~}l@{\quad}l]
-\Delta\vec{u} &= \vec{f} &\text{in~}\Omega \\
\nabla\cdot\vec{u} &= 0 &\text{on~}\Omega \\
\vec{n}\times\vec{u} &= \vec{n}\times\vec{g} &\text{on~}\partial\Omega\;.
\end{cases}
\end{equation}
As motivated in \S\ref{sec:introduction}, it can be desirable to remove the divergence condition that holds in the whole domain. In the following, we outline how this can be achieved.

%---------------------------------------------------------------------------
\subsection{Electric Boundary Conditions}
\label{subsec:EBC}
%---------------------------------------------------------------------------
Let $\Omega$ be a bounded, simply connected domain with Lipschitz boundary $\partial\Omega$. Moreover, in this paper we restrict to domains with boundaries $\partial\Omega$ which are piecewise $C^2$ and convex (see Remark~\ref{rem:re-entrant_corners}). We denote by $\vec{n}$ the outward unit normal vector along the boundary (that is defined almost everywhere). The vector Poisson equation (VPE) with electric boundary conditions (EBC) takes the form
\begin{equation}
\label{eq:VPE_EBC}
\begin{cases}[r@{~}l@{\quad}l]
-\Delta\vec{u} &= \vec{f} &\text{in~}\Omega \\
\nabla\cdot\vec{u} &= 0 &\text{on~}\partial\Omega \\
\vec{n}\times\vec{u} &= \vec{n}\times\vec{g} &\text{on~}\partial\Omega
\end{cases}
\end{equation}
where the source is incompressible, i.e., $\nabla\cdot\vec{f} = 0$. Note that in contrast to problem \eqref{eq:VPE_original}, problem \eqref{eq:VPE_EBC} possesses no source-free condition in the domain's interior. Instead, $\nabla\cdot\vec{u} = 0$ is specified as an additional boundary condition. Clearly, any solution of \eqref{eq:VPE_original} is also a solution of \eqref{eq:VPE_EBC}. Moreover\dots
\begin{lemma}
\label{lem:div-free}
If the solution to \eqref{eq:VPE_EBC} is in $H^2(\Omega)$, then it is also a solution to \eqref{eq:VPE_original}.
\end{lemma}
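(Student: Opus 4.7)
The plan is to apply the divergence operator to the PDE and show that $\phi:=\nabla\cdot\vec{u}$ solves a homogeneous Dirichlet problem for the Laplacian, forcing $\phi\equiv 0$ in $\Omega$.

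First I would observe that the $H^2(\Omega)$ hypothesis on $\vec{u}$ gives $\phi=\nabla\cdot\vec{u}\in H^1(\Omega)$, so $\phi$ has a well-defined trace in $H^{1/2}(\partial\Omega)$, and the boundary condition $\nabla\cdot\vec{u}=0$ on $\partial\Omega$ in \eqref{eq:VPE_EBC} can be read as the trace identity $\phi|_{\partial\Omega}=0$. Next I would take the divergence of the interior equation $-\Delta\vec{u}=\vec{f}$. Since $\vec{u}\in H^2$, derivatives commute in the sense of distributions, so
\begin{equation*}
-\Delta\phi \;=\; -\Delta(\nabla\cdot\vec{u}) \;=\; \nabla\cdot(-\Delta\vec{u}) \;=\; \nabla\cdot\vec{f} \;=\; 0 \quad\text{in }\Omega,
\end{equation*}
where the last equality uses the standing assumption that $\vec{f}$ is incompressible.

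At this point $\phi\in H^1(\Omega)$ is a weak harmonic function with zero trace on $\partial\Omega$. By the uniqueness part of the Lax–Milgram theorem applied to the Dirichlet problem for $-\Delta$ on the bounded Lipschitz domain $\Omega$ (equivalently, testing the weak form against $\phi$ itself and invoking Poincaré's inequality on $H^1_0(\Omega)$), this forces $\phi\equiv 0$ in $\Omega$. Consequently $\nabla\cdot\vec{u}=0$ throughout $\Omega$, which is exactly the missing interior condition in \eqref{eq:VPE_original}; the remaining equations of \eqref{eq:VPE_original} are already part of \eqref{eq:VPE_EBC}, so $\vec{u}$ solves \eqref{eq:VPE_original}.

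I do not expect any serious obstacle here; the only subtlety is verifying that the regularity assumed on $\vec{u}$ is enough to (i) interchange $\nabla\cdot$ with $\Delta$ and (ii) give meaning to the trace of $\nabla\cdot\vec{u}$ on $\partial\Omega$. The $H^2(\Omega)$ hypothesis handles both cleanly; in fact $\vec{u}\in H^1(\Omega)$ with $\Delta\vec{u}\in L^2(\Omega)$ and $\nabla\cdot\vec{u}\in H^1(\Omega)$ would suffice, but no sharpening is needed for the stated lemma.
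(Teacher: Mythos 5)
Your argument is correct and follows the same route as the paper's own proof: set $\phi=\nabla\cdot\vec{u}$, use $\nabla\cdot\vec{f}=0$ to show $\phi$ is harmonic, note the boundary condition gives $\phi=0$ on $\partial\Omega$, and conclude $\phi\equiv 0$ by uniqueness for the Dirichlet problem. The extra care you take with traces and the $H^2$ hypothesis only fleshes out details the paper leaves implicit.
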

\begin{proof}
Define $\phi = \nabla\cdot\vec{u}$. Then $\phi$ is a (weak) solution of the problem
\begin{equation*}
\begin{cases}[r@{~}l@{\quad}l]
\Delta\phi &= 0 &\text{in~}\Omega \\
\phi &= 0 &\text{on~}\partial\Omega\;,
\end{cases}
\end{equation*}
which has the unique solution $\phi\equiv 0$. Hence $\nabla\cdot\vec{u} = 0$ in $\Omega$.
\end{proof}
\begin{rem}
\label{rem:re-entrant_corners}
As shown in \cite{KangroNicholaides1999}, the assumption of Lemma~\ref{lem:div-free} is satisfied for the domains considered in this paper. However, it is not satisfied if the domain $\Omega$ has re-entrant (i.e., non-convex) corners. In such a case, the physically relevant (i.e., source-free) solution to \eqref{eq:VPE_original} is not in $H^1$, while problem \eqref{eq:VPE_EBC} possesses a solution in $H^1$, however, one that does not satisfy $\nabla\cdot\vec{u} = 0$ inside $\Omega$. In this paper we exclude this possibility, and for the domains considered here (see above) the problems \eqref{eq:VPE_original} and \eqref{eq:VPE_EBC} are in fact equivalent (see \cite{KangroNicholaides1999} for a proof).
\end{rem}

%---------------------------------------------------------------------------
\subsection{Nodal Finite Elements and Babu\u{s}ka Paradox}
\label{subsec:nodal_FEM}
%---------------------------------------------------------------------------
Among possible approaches to numerically approximate problem \eqref{eq:VPE_EBC} on an irregular domain, standard nodal-based finite elements (FE) are one of the first ideas that would come to a numerical analyst's mind. Below, we derive two possible variational formulations (\S\ref{subsubsec:FEM_variational_formulation}), and then use these to prove the possibility of the Babu\u{s}ka paradox (\S\ref{subsubsec:Babuska_paradox}). Its actual occurrence is then demonstrated via an numerical example (\S\ref{subsubsec:FEM_manufactured_solution}).

%- - - - - - - - - - - - - - - - - - - - - - - - - - - - - - - - - - - - - -
\subsubsection{Variational formulations}
\label{subsubsec:FEM_variational_formulation}
%- - - - - - - - - - - - - - - - - - - - - - - - - - - - - - - - - - - - - -
In order to conduct a FE approximation, a variational formulation of the VPE \eqref{eq:VPE_EBC} must be introduced. It is natural to work with the affine Hilbert space of vector-valued $H^1$ functions that satisfy the tangential boundary condition in \eqref{eq:VPE_EBC},
\begin{equation*}
H_{\vec{g}t}^1(\Omega)^N = \{ \vec{u} \in H^1(\Omega)^N :
\vec{n}\times(\vec{u}-\vec{g})|_{\partial\Omega} = 0\}\;.
\end{equation*}
Moreover, let $H_{0t}^1(\Omega)^N$ denote the associated homogeneous (i.e., $\vec{g} = 0$) Hilbert space. There are then two equivalent weak formulations of \eqref{eq:VPE_EBC}. To obtain the first formulation, we use the identity $-\Delta\vec{u} = \nabla\times(\nabla\times\vec{u}) - \nabla(\nabla\cdot\vec{u})$ and follow the standard procedure of multiplying the first equation in \eqref{eq:VPE_EBC} by a test function $\vec{v} \in H_{0t}(\Omega)^N$, integrating by parts, and applying the boundary conditions to the boundary integral to obtain
\begin{equation}
\label{eq:VPE_integrate_by_parts1}
\BRK{\vec{f},\vec{v}}
= \int_\Omega -\Delta\vec{u}\cdot\vec{v} \ud{x}
= a(\vec{u},\vec{v})
- \int_{\partial\Omega} (\nabla\cdot\vec{u})(\vec{n}\cdot\vec{v}) \ud{S}\;,
\end{equation}
where the bilinear form is
\begin{equation*}
a(\vec{u},\vec{v}) = \int_\Omega (\nabla\times\vec{u}) \cdot
(\nabla\times\vec{v}) + (\nabla\cdot\vec{u})(\nabla\cdot\vec{v}) \ud{x}\;.
\end{equation*}
Based on this, the first variational formulation of \eqref{eq:VPE_EBC} reads as: Given $\vec{f}\in L^2(\Omega)^N$ with $\nabla\cdot\vec{f} = 0$, find $\vec{u}\in H_{\vec{g}t}(\Omega)^N$ such that for each $\vec{v}\in H_{0t}(\Omega)^N$
\begin{equation*}
\textbf{(VP1)} \qquad\qquad
a(\vec{u},\vec{v}) = \langle \vec{f}, \vec{v} \rangle\;.\qquad\qquad\qquad
\end{equation*}
Note that due to \eqref{eq:VPE_integrate_by_parts1}, the condition $(\nabla\cdot\vec{u})_{\partial\Omega} = 0$ arises as a natural boundary condition. It is this formulation \textbf{(VP1)} that we implement in the numerical test in \S\ref{subsubsec:FEM_manufactured_solution}.

In obtaining the second variational formulation, we restrict the derivation to the case $\vec{g} = 0$, because this case is enough to show that the Babu\u{s}ka paradox can arise. Assume for a moment that $\vec{u}\in H^2(\Omega)^N$. Then, using the fact that  $\Delta\vec{u} = \nabla\cdot (\nabla\vec{u})$, we multiply the left hand side of \eqref{eq:VPE_EBC} by $\vec{v}\in H_{0t}(\Omega)^N$ and integrate by parts to obtain
\begin{equation}
\label{eq:VPE_integrate_by_parts2}
\BRK{\vec{f},\vec{v}}
= \int_\Omega -\Delta\vec{u}\cdot\vec{v} \ud{x}
= \int_\Omega \nabla\vec{u} \cdot \nabla\vec{v} \ud{x}
- \int_{\partial\Omega} \vec{v} \cdot \frac{\ud{\vec{u}}}{\ud{\vec{n}}} \ud{S}\;.
\end{equation}
Combining \eqref{eq:VPE_integrate_by_parts1} and \eqref{eq:VPE_integrate_by_parts2}, and using that $\vec{n}\times\vec{v} = 0$ on $\partial\Omega$, we can rewrite $a(\vec{u},\vec{v})$ as a new bilinear form
\begin{equation*}
b(\vec{u},\vec{v})
= \int_\Omega \nabla\vec{u} \cdot \nabla\vec{v} \ud{x}
+ \int_{\partial\Omega}
(\nabla\cdot\vec{u}-\vec{n}\cdot\frac{\ud{\vec{u}}}{\ud{\vec{n}}})
(\vec{n}\cdot\vec{v}) \ud{S}\;.
\end{equation*}
Moreover, since $\vec{n}\times\vec{u} = 0$ on $\partial\Omega$, we can expand the divergence on the boundary as
\begin{equation}
\label{eq:expansion_div_boundary}
\nabla \cdot \vec{u} = \vec{n} \cdot \frac{\ud{\vec{u}}}{\ud{\vec{n}}}
+ \kappa \vec{n} \cdot \vec{u} \qquad\text{on~}\partial\Omega\;,
\end{equation}
where $\kappa$ is the local curvature which is defined almost everywhere. Using \eqref{eq:expansion_div_boundary}, we can write $b(\vec{u},\vec{v})$ as
\begin{equation*}
b(\vec{u},\vec{v}) = \BRK{\nabla \vec{u},\nabla \vec{v}}
+ \int_{\partial\Omega} \kappa \vec{u}\cdot\vec{v} \ud{S}\;,
\end{equation*}
thus giving rise to a different variational formulation: Given $\vec{f}\in L^2(\Omega)^N$ with $\nabla\cdot\vec{f} = 0$, find $\vec{u}\in H_{0t}(\Omega)^N$ such that for each $\vec{v}\in H_{0t}(\Omega)^N$
\begin{equation*}
\textbf{(VP2)} \qquad\qquad
b(\vec{u},\vec{v}) = \langle \vec{f}, \vec{v} \rangle\;.\qquad\qquad\qquad
\end{equation*}
Clearly, by construction the bilinear forms are equal, $a(\vec{u},\vec{v}) = b(\vec{u},\vec{v})$, for functions in $H^2(\Omega)^N$. In fact, as shown in \cite{KangroNicholaides1999}, the equality also holds if the functions are in $H^1(\Omega)^N$. Moreover, the standard theory shows that the bilinear forms are coercive and continuous on $H_{0t}(\Omega)^N$ (since $\Omega$ is simply connected) so that by the Lax-Milgram theorem there is a unique solution to \textbf{(VP1)} and \textbf{(VP2)}. Moreover the variational problems \textbf{(VP1)} and \textbf{(VP2)} have the same solution.

%- - - - - - - - - - - - - - - - - - - - - - - - - - - - - - - - - - - - - -
\subsubsection{Babu\u{s}ka paradox}
\label{subsubsec:Babuska_paradox}
%- - - - - - - - - - - - - - - - - - - - - - - - - - - - - - - - - - - - - -
Using the just derived weak formulations, we prove the possible occurrence of the Babu\u{s}ka paradox. Note that other proofs have been provided before, such as in \cite{Verfurth1986}.
\begin{thm}
When solving the vector Poisson equation \eqref{eq:VPE_EBC} using a nodal FEM implementation of \textbf{(VP1)}, one may encounter the Babu\u{s}ka paradox.
\end{thm}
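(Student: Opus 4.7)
The plan is to exhibit an explicit setting in which a sequence of nodal FEM approximations of \textbf{(VP1)} converges to a function that does not solve \eqref{eq:VPE_EBC}. The driving mechanism is the same as in Babu\v{s}ka's classical plate paradox: on polygonal approximations of a curved domain the curvature term that reconciles $a(\cdot,\cdot)$ with $b(\cdot,\cdot)$ via \eqref{eq:expansion_div_boundary} disappears, so the discrete problems silently solve a variational problem whose continuum limit is different from \textbf{(VP2)}.

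Concretely, I take $\Omega$ to be a smooth domain with $\kappa$ not identically zero on $\partial\Omega$ (e.g., the unit disk, where $\kappa\equiv 1$), and pick a divergence-free $\vec{f}$ for which the true solution $\vec{u}$ of \eqref{eq:VPE_EBC} is smooth and nontrivial. Let $\{\Omega_h\}$ be a sequence of inscribed polygons with vertices on $\partial\Omega$ and maximal edge length $h$. On each $\Omega_h$ let $V_h$ denote the standard nodal $P^k$ finite element subspace of $H^1(\Omega_h)^N$ with $\vec{n}_h\times\vec{v}_h = 0$ imposed node-by-node using the polygonal normal $\vec{n}_h$. The nodal FE solution $\vec{u}_h\in V_h$ then satisfies $a(\vec{u}_h,\vec{v}_h) = \langle\vec{f},\vec{v}_h\rangle$ for all $\vec{v}_h\in V_h$.

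The crux is that on a polygonal domain, the integration by parts from \S\ref{subsubsec:FEM_variational_formulation} gives
\begin{equation*}
a(\vec{u}_h,\vec{v}_h) = \int_{\Omega_h}\nabla\vec{u}_h\cdot\nabla\vec{v}_h\ud{x},
\end{equation*}
because each flat edge has $\kappa\equiv 0$ and the nodal tangential condition kills the $(\vec{n}\cdot\vec{v})$ part of the boundary integral on edges. Thus the discrete equations are the Galerkin discretization of the bare $H^1$-Dirichlet problem with tangential boundary condition, without any curvature correction. A Strang-type argument for sequences of perturbed domains then yields convergence of $\vec{u}_h$ to the unique $\tilde{\vec{u}}\in H_{0t}^1(\Omega)^N$ satisfying
\begin{equation*}
\int_\Omega\nabla\tilde{\vec{u}}\cdot\nabla\vec{v}\ud{x} = \langle\vec{f},\vec{v}\rangle\qquad\text{for all~}\vec{v}\in H_{0t}^1(\Omega)^N.
\end{equation*}
Since the true solution $\vec{u}$ instead satisfies $\langle\nabla\vec{u},\nabla\vec{v}\rangle + \int_{\partial\Omega}\kappa\vec{u}\cdot\vec{v}\ud{S} = \langle\vec{f},\vec{v}\rangle$ with $\kappa\not\equiv 0$, uniqueness forces $\tilde{\vec{u}}\ne\vec{u}$ for suitable $\vec{f}$ (e.g., any manufactured datum with nontrivial normal boundary trace of $\vec{u}$), establishing the paradox.

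The main obstacle is to make the passage to the limit $\Omega_h\to\Omega$ rigorous, since both the domain and the ambient Sobolev space vary with $h$. A careful argument must control the $O(h^2)$ Hausdorff discrepancy between $\Omega$ and $\Omega_h$, the $O(h)$ discrepancy between $\vec{n}$ and $\vec{n}_h$ --- which is precisely what causes the curvature contribution to vanish in the discrete problem --- and the extension/restriction operators relating $V_h$ to $H_{0t}^1(\Omega)^N$. Once these ingredients are combined with a standard C\'ea estimate applied to the $\kappa=0$ limiting form, the convergence $\vec{u}_h\to\tilde{\vec{u}}\ne\vec{u}$ follows; the manufactured-solution numerical experiment in \S\ref{subsubsec:FEM_manufactured_solution} then serves as the concrete confirmation that this failure mode is realized in practice.
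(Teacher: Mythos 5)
Your argument is essentially the paper's own proof: both identify that on polygonal approximants the curvature term in \eqref{eq:expansion_div_boundary} vanishes, so the nodal FEM solutions are Galerkin approximations of the $\kappa=0$ problem and converge to a limit $\vec{u}^*$ solving $\int_\Omega\nabla\vec{u}^*\cdot\nabla\vec{v}\ud{x}=\langle\vec{f},\vec{v}\rangle$, which differs from the true solution whenever $\kappa\not\equiv 0$. You are, if anything, more candid than the paper in flagging that the convergence of $\vec{u}_h$ on the varying domains $\Omega_h$ to $\vec{u}^*$ is asserted rather than fully proved, but the mechanism and conclusion are the same.
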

\begin{proof}
Suppose that $\vec{u}_h$ solves \textbf{(VP1)} using nodal elements, a triangular mesh, and a regular polygonal domain $\Omega_h$. Here $h$ denotes the diameter of the largest mesh element, so that $\Omega_h\to\Omega$ (in the appropriate sup-norm sense) as $h\to 0$. Now, under the current assumptions on the domain $\Omega$, the problems \textbf{(VP1)} and \textbf{(VP2)} have the same weak solution. Then by the equivalence of the two problems, $\vec{u}_h$ also solves \textbf{(VP2)}. For any given mesh, however, the boundary of $\Omega_h$ has flat sides with $\kappa = 0$. Consequently, the weak solution $\vec{u}_h$ solves \textbf{(VP2)} with $\kappa = 0$, i.e.,
\begin{equation*}
\int_{\Omega_h} \nabla\vec{u} \cdot \nabla\vec{v} \ud{x}
= \int_{\Omega_h} \vec{f}\cdot\vec{v} \ud{x}
\end{equation*}
for each test function $\vec{v}_h$. Therefore the solutions $\vec{u}_h$ converge to the function $\vec{u}^*$ which solves the limit problem
\begin{equation*}
\int_{\Omega} \nabla\vec{u}^* \cdot \nabla\vec{v} \ud{x}
= \langle \vec{f},\vec{v} \rangle
\end{equation*}
for each $\vec{v} \in H_{0t}(\Omega)^N$. In other words, the nodal FEM solutions $\vec{u}_h$ converge to a solution where $\kappa$ is artificially set to zero, or equivalently to a problem where one replaces the boundary condition $\nabla\cdot\vec{u} = 0$ with $\frac{\ud{\vec{u}}}{\ud{\vec{n}}} = 0$. Hence, for an arbitrary domain (with boundaries that are at least partially curved), generally $\vec{u}^*$ does not equal the true solution of the problem \textbf{(VP1)}.
\end{proof}

\begin{figure}
\begin{minipage}[b]{.49\textwidth}
\includegraphics[width=\textwidth]{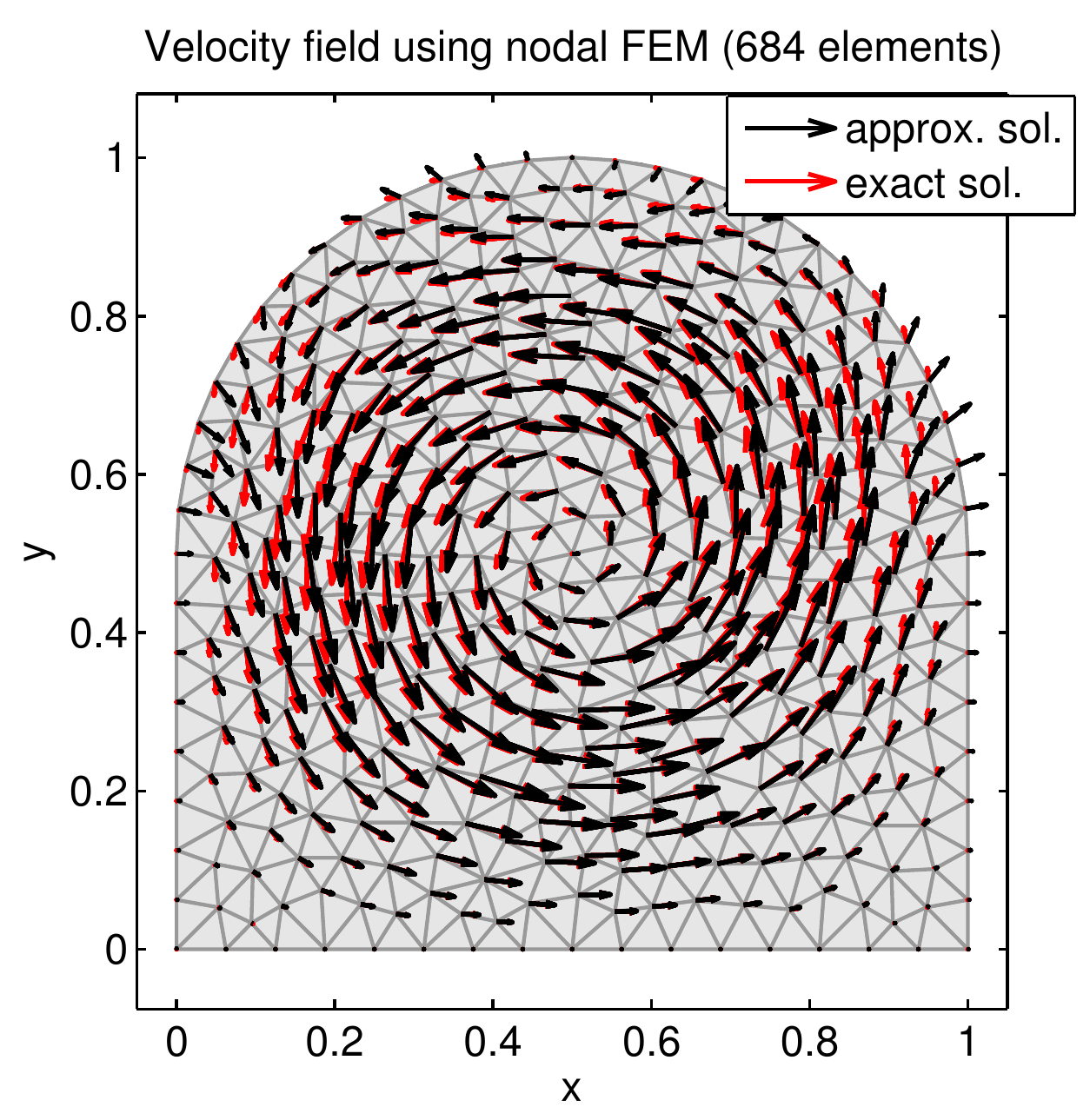}
\end{minipage}
\hfill
\begin{minipage}[b]{.49\textwidth}
\includegraphics[width=\textwidth]{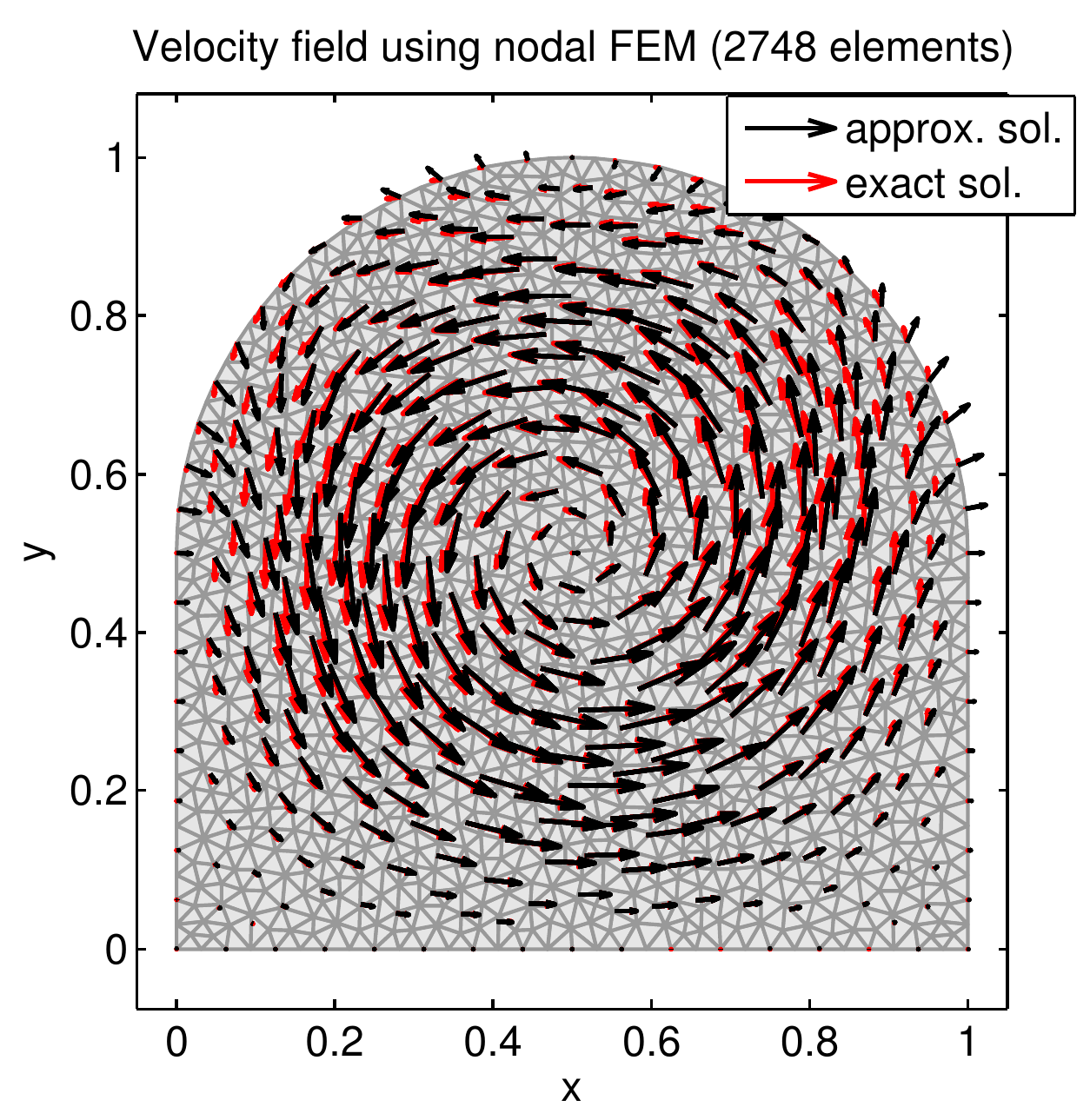}
\end{minipage}
\vspace{-.4em}
\caption{True vector field (red arrows) and numerical approximations (black arrows) obtained via nodal FEM. The numerical solutions obtained with two different mesh resolutions are almost identical, thus the numerics are essentially converged. Yet, the FEM solution differs from the true solution.}
\label{fig:FEM_solutions}
\end{figure}

%- - - - - - - - - - - - - - - - - - - - - - - - - - - - - - - - - - - - - -
\subsubsection{Manufactured solution test case}
\label{subsubsec:FEM_manufactured_solution}
%- - - - - - - - - - - - - - - - - - - - - - - - - - - - - - - - - - - - - -
We now demonstrate the Babu\u{s}ka paradox via a numerical example. On the 2D domain $\Omega=\{(x-0.5)^2+(y-0.5)^2<0.5^2\}\cup (0,1)\times (0,0.5)$, we consider the VPE \eqref{eq:VPE_EBC}. We employ the method of manufactured solutions, i.e., we prescribe the incompressible solution
\begin{equation}
\label{eq:manufactured_solution}
\vec{u}(x,y) =
\begin{pmatrix}
\phantom{-}\pi\sin(2\pi y)\sin^2(\pi x) \\
-\pi\sin(2\pi x)\sin^2(\pi y)
\end{pmatrix}\,,
\end{equation}
and set the forcing $\vec{f} = -\Delta\vec{u}$ and the boundary velocity $\vec{g} = \vec{u}$, so that the solution of the VPE \eqref{eq:VPE_EBC} recovers the prescribed solution \eqref{eq:manufactured_solution}.

We consider two triangulations (of different resolutions) that approximate the domain $\Omega$ via polygons with straight edges. To define normal vectors at the boundary vertices, we adopt the method introduced in \cite{EngelmanSani1982}, which obtains normal vectors as suitable averages of normal vectors at the edges connecting to the boundary vertex. Using standard nodal-based finite elements, or Lagrange FE \cite{Courant1943}, we implement the tangential boundary condition in an essential fashion (by choosing the solution space $H_{\vec{g}t}^1(\Omega)^N$), and leave the normal component unprescribed, with the idea that $\nabla\cdot\vec{u} = 0$ follows naturally (see above). We use quadratic $C^0$ elements.

The results of the two FEM approximations are shown in Fig.~\ref{fig:FEM_solutions}. The true vector field \eqref{eq:manufactured_solution} is shown by red arrows, and the approximate FEM solution is given by black arrows. In each case, the mesh that is used to conduct the computation is shown in the background. The right panel shows a computation with a mesh that is twice as fine as the one in the left panel. The two numerical solutions are almost identical, and hence they can be interpreted as converged (in the eye-norm) to the limit ($h\to 0$) solution $\vec{u}^*$ of the nodal FEM. Clearly the FEM solution $\vec{u}^*$ is different from the true solution $\vec{u}$, thus confirming the occurrence of the Babu\u{s}ka paradox.

\begin{rem}
Within the framework of FEM, the Babu\u{s}ka paradox can be overcome by moving to a mixed FEM formulation (cf.~\cite{RaviartThomas1977, Verfurth1986, ArnoldFalkWinther2010, ArnoldFalkGopalakrishnan2012}). The idea of mixed FEM for the VPE \eqref{eq:VPE_EBC} is introduce an additional variable, $\sigma = \nabla\times\vec{u}$, and transform $-\Delta\vec{u} = \vec{f}$ into two equations: $\sigma = \nabla\times\vec{u}$ and $\nabla\times\sigma-\nabla(\nabla\cdot\vec{u}) = \vec{f}$. This framework allows one to use Raviart-Thomas elements \cite{RaviartThomas1977} for the approximate vector field $\vec{u}^h$ (and standard nodal elements (2D) or N\'{e}d\'{e}lec elements \cite{Nedelec1980} (3D) for $\sigma^h$), and to incorporate the tangential velocity boundary condition $\vec{n}\times (\vec{u}-\vec{g}) = 0$ as boundary integrals into the weak formulation, rather than into the solution space. In a companion paper \cite{ChidyagwaiRosalesSeiboldShirokoffZhou2013}, we apply high-order mixed FEM to the pressure Poisson equation reformulations of the Navier-Stokes equations devised in \cite{ShirokoffRosales2010}. While mixed FEM overcome the Babu\u{s}ka paradox, this framework is clearly not as simple as nodal FEM, or as meshfree FD, described below.
\end{rem}

%---------------------------------------------------------------------------
\subsection{Meshfree Finite Difference Method}
\label{subsec:VPE_meshfree_FD}
%---------------------------------------------------------------------------
Meshfree finite differences (FD) generalize classical FD that are defined on regular grids: at a given point, a differential operator of a smooth function is approximated via a combination of function values at nearby points. The selection of points and the corresponding weights are called the \emph{stencil}. In the same way as grid-based FD, meshfree FD can be derived in two ways: as derivatives of suitable local interpolants of the data (cf.~\cite{LiszkaOrkisz1980, DuarteLiszkaTworzyako1996, Levin1998}), or via Taylor expansion of the solution (cf.~\cite{Seibold2008}). Here we outline the second methodology.

Consider a \emph{point cloud} that consists of interior points (inside $\Omega$) and boundary points (on $\partial\Omega$); see Fig.~\ref{fig:point_cloud} for an example. For a point $x_i$, let a neighborhood $B_i$ be defined. Here we employ circular neighborhoods, i.e., $B_i = \{j : \|x_j-x_i\|\le r\}$, where $r$ is an appropriately chosen radius (see below). However, many other types of neighborhoods are possible \cite{SeiboldDiss2006}. Now define the relative coordinates $\bar{x}_{ij} = x_j-x_i$ and Taylor-expand the solution $u(x)$ around $x_i$:
\begin{equation*}
u(x_j) = u(x_i)+\nabla u(x_i)\cdot\bar{x}_{ij}
+\tfrac{1}{2}\nabla^2 u(x_i):(\bar{x}_{ij}\cdot \bar{x}_{ij}^T)+\text{h.o.t.}
\end{equation*}
Note that in the quadratic term, the matrix scalar product $A:C=\sum_{i,j}A_{ij}C_{ij}$ and the outer product of $\bar{x}_{ij}$ with itself are used. While here we stop at the quadratic term, the expansion can of course be carried out further (or less far). A linear combination (with weights $a_{ij}$) of nearby solution values yields
\begin{equation}
\label{eq:stencil_linear_combination}
\sum_{j\in B_i} a_{ij} u(x_j) =
u(x_i) \sum_{j\in B_i} a_{ij}
+\nabla u(x_i) \cdot \sum_{j\in B_i} a_{ij}\bar{x}_{ij}
+\nabla^2 u(x_i) : \tfrac{1}{2}\!\sum_{j\in B_i}
a_{ij} (\bar{x}_{ij} \cdot \bar{x}_{ij}^T)
+\text{h.o.t.}
\end{equation}
If \eqref{eq:stencil_linear_combination} is supposed to approximate a given differential operator applied to the solution, then the stencil weights $a_{ij}$ must satisfy certain constraints. For instance, for \eqref{eq:stencil_linear_combination} to approximate $\Delta u(x_i)$, it is required that
\begin{equation*}
\sum_{j\in B_i} a_{ij} = 0 \; , \quad
\sum_{j\in B_i} \bar{x}_{ij} a_{ij} = 0 \; \text{, and} \quad
\sum_{j\in B_i} (\bar{x}_{ij}\cdot\bar{x}_{ij}^T) a_{ij} = 2I\;,
\end{equation*}
which in 2D gives rise to the linear system of constraints
\begin{equation}
\label{eq:constraints_Laplacian_first_order}
\underbrace{\begin{pmatrix}
\bar{x}_{i,j_1} & \dots & \bar{x}_{i,j_{m_i}} \\
\bar{y}_{i,j_1} & \dots & \bar{y}_{i,j_{m_i}} \\
\bar{x}_{i,j_1}^2 & \dots & \bar{x}_{i,j_{m_i}}^2 \\
\bar{x}_{i,j_1}\bar{y}_{i,j_1} & \dots & \bar{x}_{i,j_{m_i}}\bar{y}_{i,j_{m_i}} \\
\bar{y}_{i,j_1}^2 & \dots & \bar{y}_{i,j_{m_i}}^2
\end{pmatrix}}_{V_i}
\cdot
\underbrace{\begin{pmatrix} a_{i,j_1} \\ \vdots \\ \vdots \\
a_{i,j_{m_i}} \end{pmatrix}}_{\vec{a}_i}
=
\underbrace{\begin{pmatrix} 0 \\ 0 \\ 2 \\ 0 \\ 2 \end{pmatrix}}_{\vec{b}}\,.
\end{equation}
Here, the stencil vector $\vec{a}_i$ does not contain the diagonal entry $a_{ii}$. Its value is obtained as $a_{ii} = -\sum_{j\in B_i\setminus\{i\}} a_{ij}$. Moreover, $m_i = |B_i|-1$ is the number of neighbors of $x_i$. If the radius $r$ is chosen large enough that $m_i\ge 5\;\forall\,i$, and if the point cloud generation (see \S\ref{subsec:point_cloud_generation}) ensures that no pathological point configurations arise (see \cite{SeiboldDiss2006} for examples), then system \eqref{eq:constraints_Laplacian_first_order} always has a solution, and the resulting approximation is (at least) first order accurate.

If $m_i>5$, system \eqref{eq:constraints_Laplacian_first_order} in general has infinitely many solutions. One way (employed here) to single out a unique solution is via a weighted least-squares (WLSQ) minimization problem
\begin{equation}
\label{eq:stencil_WLSQ}
\min\!\sum_{j\in B_i\setminus\{i\}}\!\frac{a_{ij}^2}{w_{ij}}\;,\
\text{s.t.}\ V_i\cdot\vec{a}_i = \vec{b}
\end{equation}
where the weights are decreasing with the distance, $w_{ij} = \|x_j-x_i\|_2^{-\beta}$ (here we choose $\beta = 2$). The solution of \eqref{eq:stencil_WLSQ} is
\begin{equation*}
\vec{a}_i = W_iV_i^T(V_iW_iV_i^T)^{-1}\cdot\vec{b}\;.
\end{equation*}
where $W = \text{diag}(w_{i,1},\dots,w_{i,j_{m_i}})$. Note that an alternative approach (not employed here) would be to solve system \eqref{eq:constraints_Laplacian_first_order} in an $\ell^1$ sense, i.e.,
\begin{equation*}
\min\!\sum_{j\in B_i\setminus\{i\}}\!\frac{a_{ij}}{w_{ij}}\;,\
\text{s.t.}\ V_i\cdot\vec{a}_i = \vec{b}\;,\ \vec{a}_i\ge 0\;,
\end{equation*}
which would generate optimally sparse stencils \cite{Seibold2008}.

Other differential operators are approximated in an analogous fashion. For instance, a first order approximation to $\partial_x u$ is obtained by setting
\begin{equation*}
V_i =
\begin{pmatrix}
\bar{x}_{i,j_1} & \dots & \bar{x}_{i,j_{m_i}} \\
\bar{y}_{i,j_1} & \dots & \bar{y}_{i,j_{m_i}}
\end{pmatrix}
\,,\
\vec{b} = \begin{pmatrix} 1 \\ 0 \end{pmatrix}\,,
\end{equation*}
and a second order approximation to $\partial_x u$ is obtained by setting
\begin{equation*}
V_i =
\begin{pmatrix}
\bar{x}_{i,j_1} & \dots & \bar{x}_{i,j_{m_i}} \\
\bar{y}_{i,j_1} & \dots & \bar{y}_{i,j_{m_i}} \\
\bar{x}_{i,j_1}^2 & \dots & \bar{x}_{i,j_{m_i}}^2 \\
\bar{x}_{i,j_1}\bar{y}_{i,j_1} & \dots & \bar{x}_{i,j_{m_i}}\bar{y}_{i,j_{m_i}} \\
\bar{y}_{i,j_1}^2 & \dots & \bar{y}_{i,j_{m_i}}^2
\end{pmatrix}
\,,\
\vec{b} = \begin{pmatrix} 1 \\ 0 \\ 0 \\ 0 \\ 0 \end{pmatrix}\,.
\end{equation*}

Applying this procedure to each equation (at all interior points) and boundary condition (at all boundary points) of the vector Poisson equation \eqref{eq:VPE_EBC} leads (here is 2D) to the linear system
\begin{equation}
\label{eq:VPE_linear_system}
\begin{footnotesize}
\begin{array}{|@{~}c@{~}c@{~}c@{~}c@{~}c@{~}c@{~}|
@{~}c@{~}c@{~}c@{~}c@{~}c@{~}c@{~}|@{~}c@{~}c@{~}c@{~}c@{~}|@{~}c@{~}c@{~}c@{~}c@{~}|}
\hline
*& &*&*& &*  &   & & & & &   &   &*&*&   &   & & &  \\[-.4em]
 &*&*& &*&   &   & & & & &   &   & & &*  &   & & &  \\[-.4em]
*&*&*& & &*  &   & & & & &   &  *& &*&*  &   & & &  \\[-.4em]
*& & &*&*&   &   & & & & &   &  *& &*&   &   & & &  \\[-.4em]
 &*& &*&*&*  &   & & & & &   &  *&*& &   &   & & &  \\[-.4em]
*& &*& &*&*  &   & & & & &   &  *& & &*  &   & & &  \\
\hline
 & & & & &   &  *& &*&*& &*  &   & & &   &   &*&*&  \\[-.4em]
 & & & & &   &   &*&*& &*&   &   & & &   &   & & &* \\[-.4em]
 & & & & &   &  *&*&*& & &*  &   & & &   &  *& &*&* \\[-.4em]
 & & & & &   &  *& & &*&*&   &   & & &   &  *& &*&  \\[-.4em]
 & & & & &   &   &*& &*&*&*  &   & & &   &  *&*& &  \\[-.4em]
 & & & & &   &  *& &*& &*&*  &   & & &   &  *& & &* \\
\hline
 & &*& &*&*  &   & &*& &*&*  &  *& &*&   &  *& &*&  \\[-.4em]
*&*& & &*&   &  *&*& & &*&   &   &*& &*  &   &*& &* \\[-.4em]
 &*&*& & &*  &   &*&*& & &*  &  *& &*&*  &  *& &*&* \\[-.4em]
*& & &*& &   &  *& & &*& &   &   &*&*&*  &   &*&*&* \\
\hline
 & & & & &   &   & & & & &   &  *& & &   &  *& & &  \\[-.4em]
 & & & & &   &   & & & & &   &   &*& &   &   &*& &  \\[-.4em]
 & & & & &   &   & & & & &   &   & &*&   &   & &*&  \\[-.4em]
 & & & & &   &   & & & & &   &   & & &*  &   & & &* \\
\hline
\end{array}
\cdot
\begin{array}{|@{~}c@{~}|}
\hline
u^x_1 \\[-.3em] \cdot \\[-.6em] \cdot \\[-.6em] \cdot \\[-.6em]
\cdot \\[-.1em] u^x_{N_\text{i}} \\[.2em]
\hline
u^y_1 \\[-.3em] \cdot \\[-.6em] \cdot \\[-.6em] \cdot \\[-.6em]
\cdot \\[-.1em] u^y_{N_\text{i}} \\[.2em]
\hline
u^x_{N_\text{i}+1} \\[-.3em] \cdot \\[-.6em] \cdot \\[-.4em] u^x_N \\[.1em]
\hline
u^y_{N_\text{i}+1} \\[-.3em] \cdot \\[-.6em] \cdot \\[-.4em] u^y_N \\[.1em]
\hline
\end{array}
=
\begin{array}{|@{}c@{}|}
\hline
f^x_1 \\[-.2em] \cdot \\[-.6em] \cdot \\[-.6em] \cdot \\[-.6em]
\cdot \\[-.2em] f^x_{N_\text{i}} \\[.2em]
\hline
f^y_1 \\[-.2em] \cdot \\[-.6em] \cdot \\[-.6em] \cdot \\[-.6em]
\cdot \\[-.2em] f^y_{N_\text{i}} \\[.2em]
\hline
0 \\[-.5em] \cdot \\[-.6em] \cdot \\[-.2em] 0 \\[.1em]
\hline
g_{N_\text{i}+1} \\[-.2em] \cdot \\[-.6em] \cdot \\[-.4em]
g_N \\[.1em]
\hline
\end{array}
\;
\begin{array}{l}
\left.\rule{0pt}{2.80em}\right\}\;
\Delta u^x = f^x \quad\text{in~}\Omega \\
~\\[-1em]
\left.\rule{0pt}{2.80em}\right\}\;
\Delta u^y = f^y \quad\text{in~}\Omega \\
~\\[-1em]
\left.\rule{0pt}{1.90em}\right\}\;
\partial_x u^x+\partial_y u^y = 0 \quad\text{on~}\partial\Omega \\
~\\[-1em]
\left.\rule{0pt}{1.90em}\right\}\;
n^x u^y-n^y u^x = g \quad\text{on~}\partial\Omega
\end{array}
\end{footnotesize}
\end{equation}
In this system, $\vec{u} = (u^x,u^y)$, $\vec{f} = (f^x,f^y)$, and the function $g = \vec{n}\times\vec{g}$ at the boundary, and $g_i = \vec{n}_i\times\vec{g}_i$. Moreover, the total number of points is $N$, and the number of interior points is $N_\text{i}$. In the sparse block matrix, the first two rows of blocks correspond to the two components of the Poisson equation at the interior points; the third block row encodes the $\nabla\cdot\vec{u} = 0$ boundary condition; and the fourth block row represents the $\vec{n}\times (\vec{u}-\vec{g}) = 0$ condition. The first two block columns corresponds to the two vector field components at the interior points $x_1,\dots,x_{N_\text{i}}$; and likewise the last two block columns correspond to $\vec{u} = (u^x,u^y)$ at the boundary points $x_{N_\text{i}},\dots,x_N$. Each empty block is devoid of nonzero entries, and each block with stars possesses some nonzero entries, corresponding to the neighboring points of each central point.

%---------------------------------------------------------------------------
\subsection{Point Cloud Generation}
\label{subsec:point_cloud_generation}
%---------------------------------------------------------------------------
The generation of point clouds is conducted similarly to the mesh generation code \textsf{DistMesh} \cite{PerssonStrang2004}. The domain $\Omega$ is specified via a level set function $\phi$, chosen so that $\Omega = \{x : \phi(x)<0\}$. Hence, $\phi$ also defines the boundary $\partial\Omega = \{x : \phi(x)=0\}$ and surface normal vectors $\vec{n} = \nabla\phi/|\nabla\phi|$. The access to the level set function allows an immediate check whether a given location/point is inside or outside of the domain. Moreover, if $\phi$ is a signed distance function (i.e., $|\nabla\phi| = 1$ a.e.), one even has immediate access to a location's distance to the domain boundary.

\begin{figure}
\centering
\begin{minipage}[b]{.74\textwidth}
\centering
\includegraphics[width=.64\textwidth]{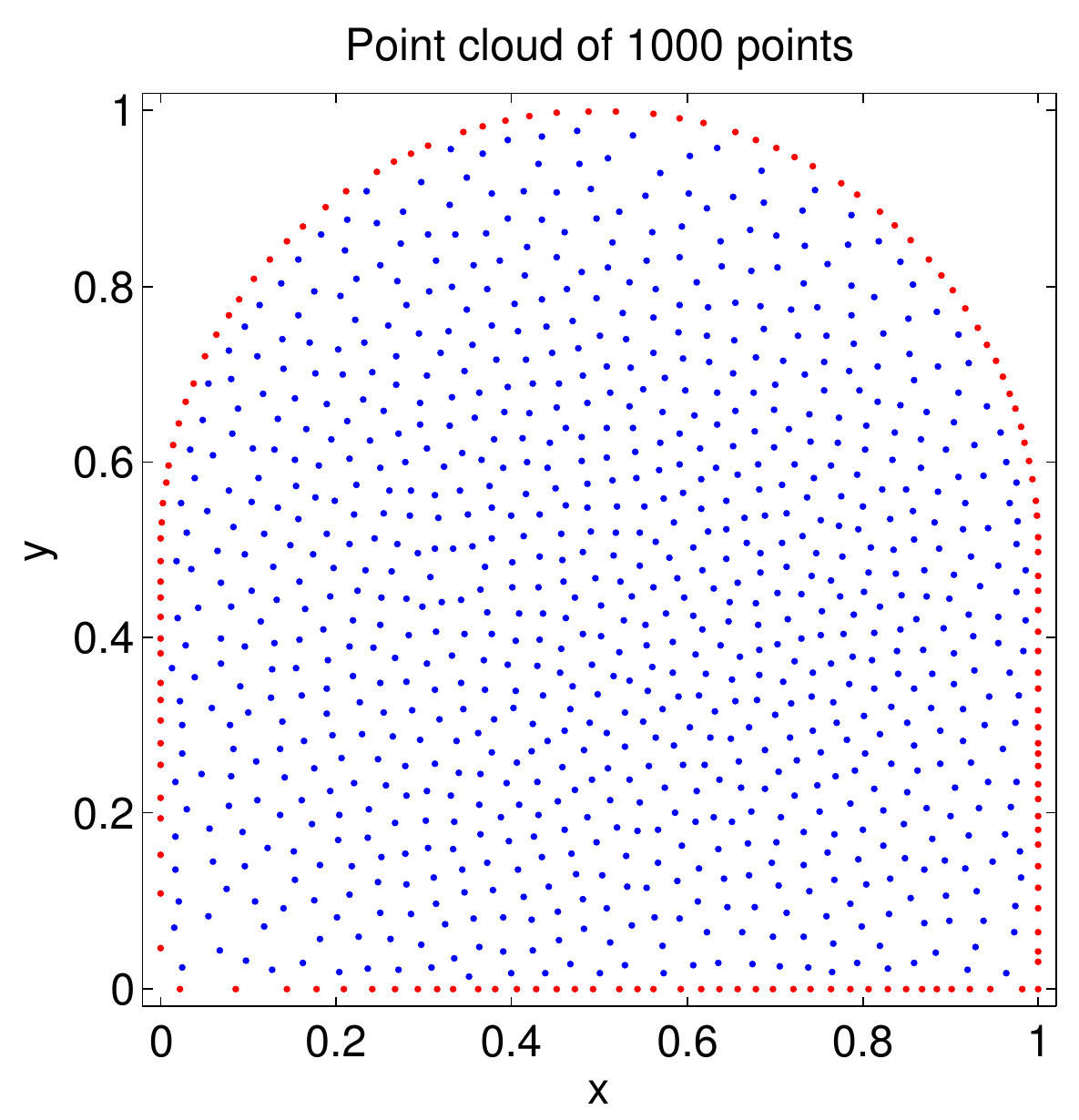}
\vspace{-.4em}
\caption{Point cloud for the computational domain $\Omega$ with 1000 points. The boundary points are shown in red and the interior points in blue.}
\label{fig:point_cloud}
\end{minipage}
\end{figure}

The generation of a cloud of $N$ points is initiated by placing $N$ points randomly inside $\Omega$. After that, the points are moved according to repulsive forces experienced from nearby points, i.e., the point $x_i$ moves according to
\begin{equation}
\label{eq:point_motion}
\dot{x}_i = \!\sum_{j\in B_i\setminus\{i\}}\!
\min\{\|x_j-x_i\|_2^{-2},v_\text{max}\}\;,
\end{equation}
where $B_i$ is a circular neighborhood around $x_i$, and $v_\text{max}$ is some upper bound on the repulsion. The law of motion \eqref{eq:point_motion} is further constrained by $x_i\in\bar{\Omega}$, i.e., no point can ever move beyond the domain boundary to leave the domain. A simple way to implement this constraint is to allow points to slightly leave $\bar{\Omega}$, but then to immediately project them back onto $\partial\Omega$, using the normal $\nabla\phi/|\nabla\phi|$ that is defined also outside of $\bar{\Omega}$. Finally, points that are inside $\Omega$ but too close to the boundary are also projected onto $\partial\Omega$, thus preventing interior points from being too close to the boundary (see \cite{Seibold2008} for why this would be undesirable). The law of motion \eqref{eq:point_motion} is then applied to all points until the amount of motion has fallen below a given threshold. The resulting point clouds are unstructured, and tend to be quite uniform (i.e., the ratio between the minimum distance between points and the radius of the largest ball that contains no points (cf.~\cite{Levin1998}) is quite large). An example of a point cloud associated with the domain defined in \S\ref{subsubsec:FEM_manufactured_solution} is shown in Fig.~\ref{fig:point_cloud}.

For the definition of a ``mesh'' resolution $h$ of a given point cloud, there is a variety of possible choices \cite{Levin1998}. Here we use a simple averaged concept of resolution, defined as follows. One type of configuration in which nearby points are equidistant is an optimal sphere packing, which in 2D is a hexagonal lattice, i.e., adjacent points form equilateral triangles. Each point is a corner of six triangles, and each triangle is shared by three points. One can therefore associate to each point 1/3 of each of the six triangles, resulting in an area per point of $A = \frac{\sqrt{3}}{2}h^2$, if the spacing between points is $h$. Counting the area of the $N_\text{i}$ interior points full, and of the $N_\text{b}$ boundary points half, and equating the total ``point area'' with the area of the domain, $\lambda(\Omega)$, we obtain the expression $h = \sqrt{\frac{4\lambda(\Omega)}{\sqrt{3}(2N_\text{i}+N_\text{b})}}$ for the resolution.

\begin{figure}
\begin{minipage}[b]{.32\textwidth}
\includegraphics[width=\textwidth]{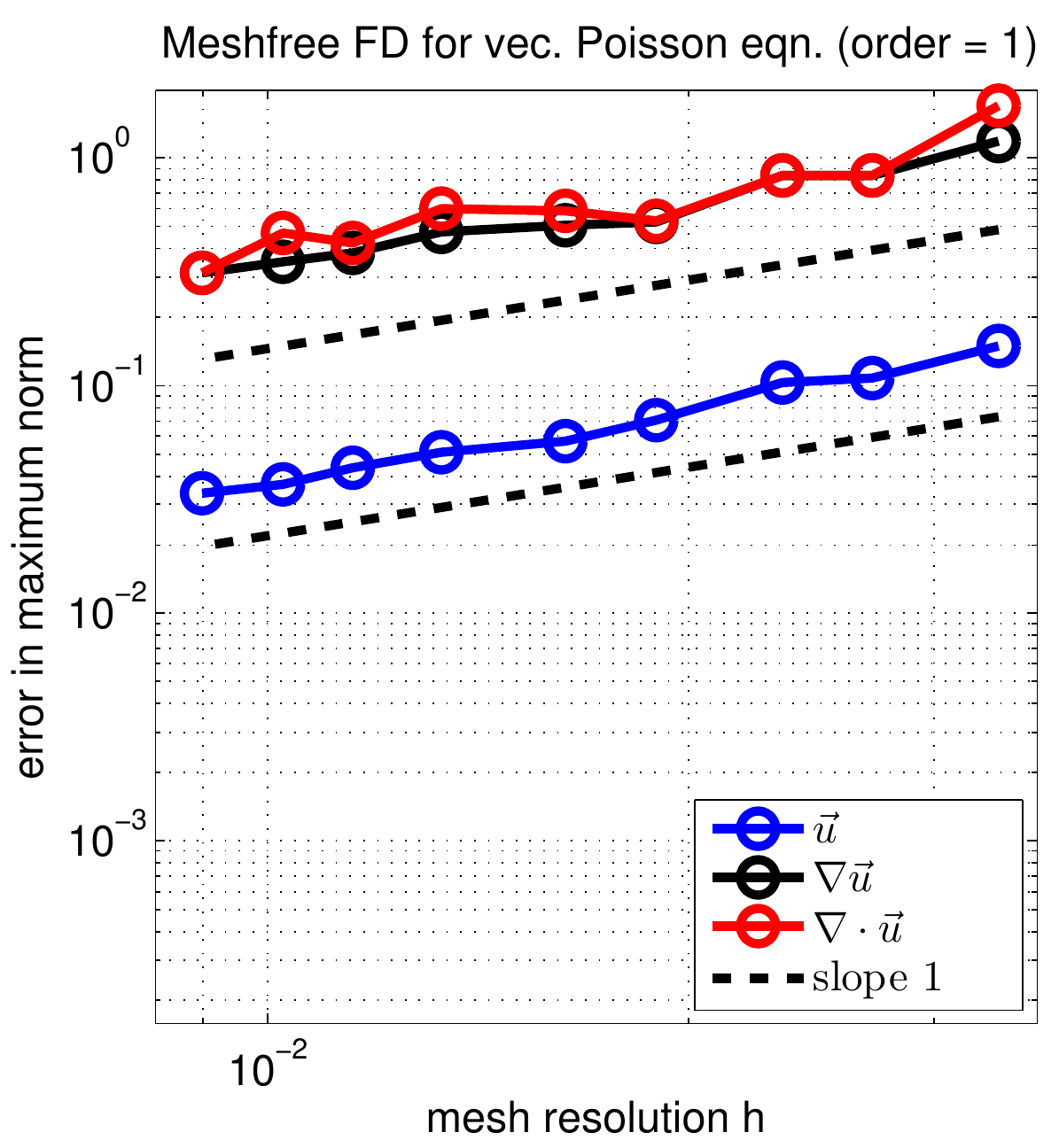}
\end{minipage}
\hfill
\begin{minipage}[b]{.32\textwidth}
\includegraphics[width=\textwidth]{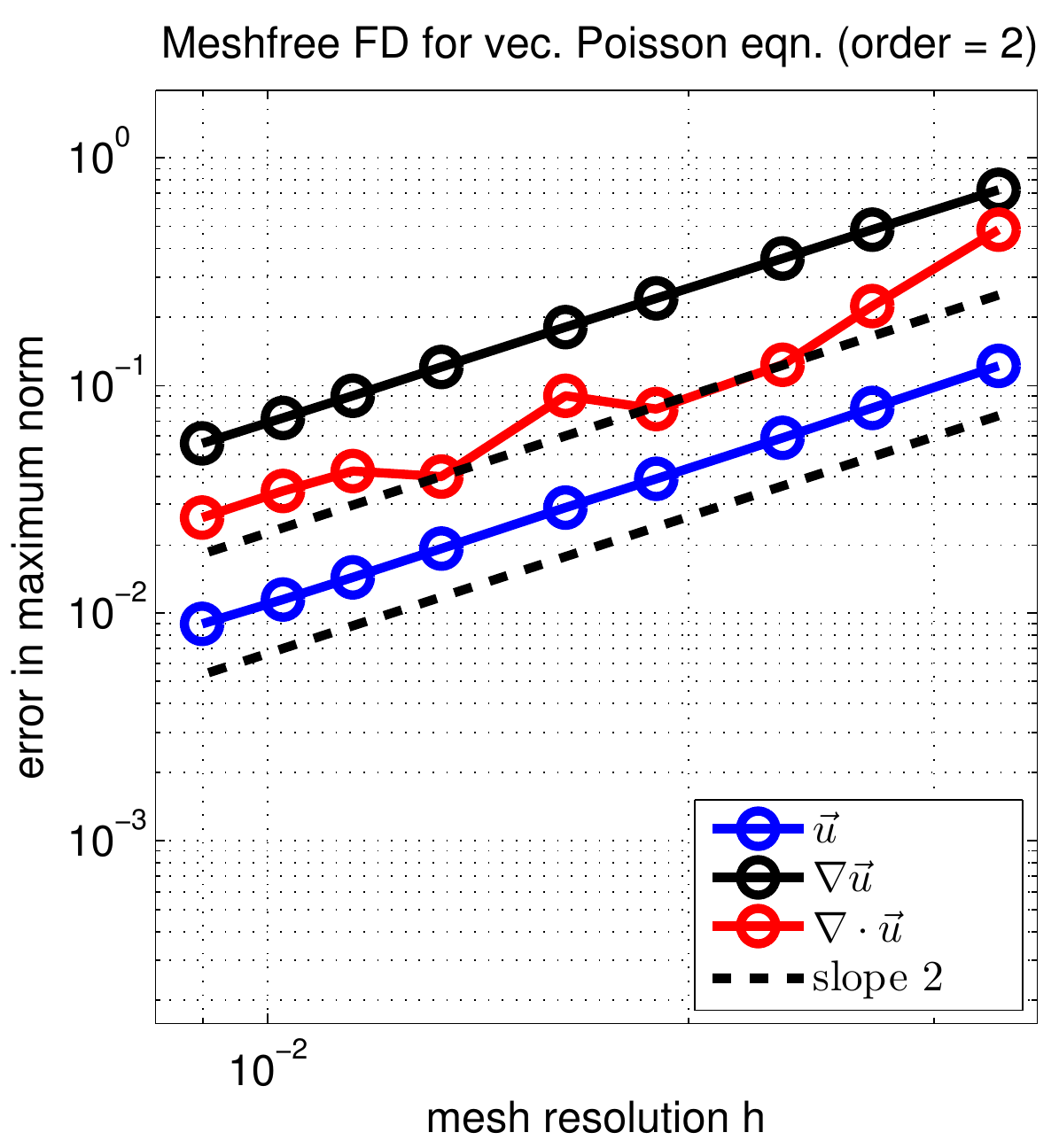}
\end{minipage}
\hfill
\begin{minipage}[b]{.32\textwidth}
\includegraphics[width=\textwidth]{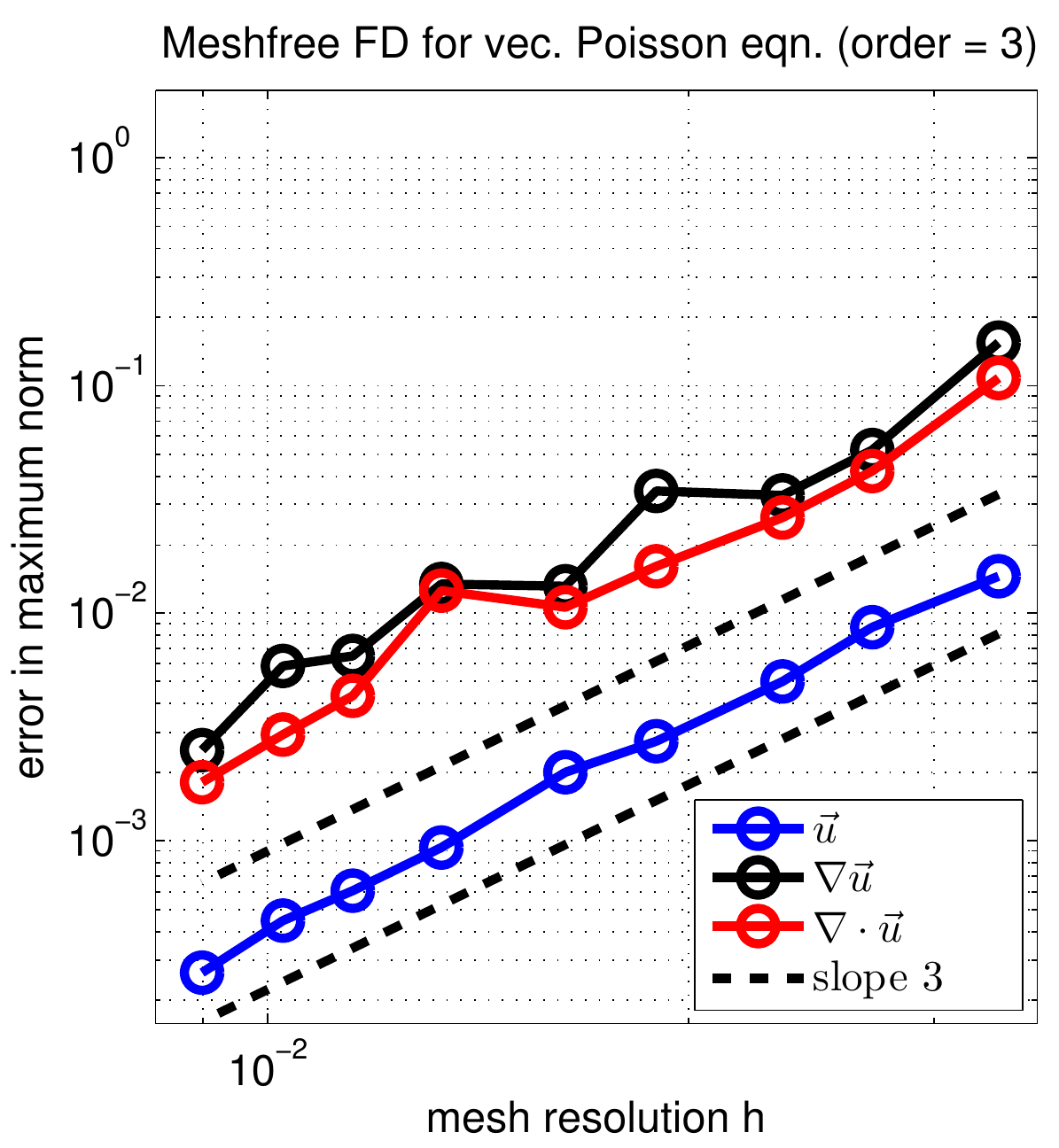}
\end{minipage}
\vspace{-.4em}
\caption{Error convergence for first (left), second (middle), and third (right) order meshfree FD approximations. The errors are measured in the maximum norm. The results show that $k^\text{th}$ order meshfree FD stencils result in $k^\text{th}$ order convergence rates for the solution of the VPE, and its derivatives.}
\label{fig:VPE_meshfree_solutions}
\end{figure}

%---------------------------------------------------------------------------
\subsection{Numerical Results}
\label{subsec:VPE_numerical_results}
%---------------------------------------------------------------------------
We consider the same manufactured solution test problem as studied in \S\ref{subsubsec:FEM_manufactured_solution}. On the domain $\Omega=\{(x-0.5)^2+(y-0.5)^2<0.5^2\}\cup (0,1)\times (0,0.5)$ (see Figs.~\ref{fig:FEM_solutions} and \ref{fig:point_cloud}), point clouds of various numbers of points are generated, so that convergence studies can be conducted. For each point cloud, the vector Poisson equation \eqref{eq:VPE_EBC} is discretized via the procedure described in \S\ref{subsec:VPE_meshfree_FD}. We conduct the meshfree FD approximation for three different orders: first order accuracy, i.e., the Taylor expansion in \eqref{eq:stencil_linear_combination} is carried out up to the quadratic term for $\Delta\vec{u}$ and up to the linear term for $\nabla\cdot\vec{u}$; and second and third order accuracies, for which the Taylor expansions in \eqref{eq:stencil_linear_combination} are carried out further accordingly.

The numerical approximations obtained for the different orders and mesh resolutions are then compared to the true solution in the maximum norm, taken over all points. We consider the errors in the vector field $\vec{u}$ itself, as well as its Jacobian $\nabla\vec{u}$ (which is important for calculating stresses at the boundary when $\vec{u}$ represents a velocity field) and its divergence $\nabla\cdot\vec{u}$ (which by the equivalence of problems \eqref{eq:VPE_original} and \eqref{eq:VPE_EBC} should be close to zero). All derivative quantities are obtained from the vector field $\vec{u}$ via meshfree FD stencils of fourth order. Hence, if an order of less than 4 is observed, we know that this is the true accuracy of the numerical result.

The error convergence of these quantities is shown in Fig.~\ref{fig:VPE_meshfree_solutions}, for the approximation orders 1 (left panel), 2 (middle panel), and 3 (right panel). The results show that all approaches converge as $h\to 0$, and the convergence orders equal the local approximation orders. In particular, the convergence orders of the derivative quantities are the same as those of the vector field itself. This is an important property that finite difference methods commonly exhibit, and that is in contrast to finite element methods that frequently lose an order of accuracy when a derivative quantity is evaluated (in the sense of functions).

%===========================================================================
\vspace{1.5em}
\section{Pressure Poisson Equation Reformulation of the Navier-Stokes Equations}
\label{sec:PPE_reformulation}
%===========================================================================
We consider the time-dependent incompressible Navier-Stokes equations (NSE)
\begin{equation}
\label{eq:NSE}
\begin{cases}[r@{~}l@{\quad}l]
\partial_t\vec{u} + (\vec{u}\cdot\nabla)\vec{u}
&= -\nabla p + \nu\Delta\vec{u} + \vec{f} &\text{in~}\Omega\times (0,T) \\
\vec{u} &= \vec{g} &\text{on~}\partial\Omega\times (0,T) \\
\vec{u} &= \mathring{\vec{u}} &\text{on~}\Omega\times\{t=0\}\;,
\end{cases}
\end{equation}
with compatibility conditions
\begin{align*}
\mathring{\vec{u}} &= \vec{g}\phantom{0} \ \ \text{on~}\partial\Omega\times \{t=0\}
& \text{(continuity between i.c. and b.c.)} \\
\nabla\cdot\mathring{\vec{u}} &= 0\phantom{\vec{g}} \ \ \text{in~}\Omega
& \text{(incompressible i.c.)} \\
\int_{\partial\Omega} \vec{n}\cdot\vec{g} \ud{x} &= 0\;.
& \text{(inflow = outflow)}
\end{align*}
Due to the lack of a time evolution of the pressure, there is no single canonical way to numerically advance \eqref{eq:NSE} forward in time. One class of approaches to do so is based on approximating the time derivative in the momentum equation, and solving for $\vec{u}$ and $p$ in a fully coupled fashion. This methodology is accurate, but also costly, because a large system must be solved that possesses a saddle point structure. An alternative class of approaches decouples the pressure solve from the velocity update. This methodology was first proposed in the form of projection methods \cite{Chorin1968, Temam1969}, and later employed in approaches based on pressure Poisson equation (PPE) reformulations of the NSE \cite{Henshaw1994, HenshawKreissReyna1994, JohnstonLiu2002, HenshawPetersson2003, ShirokoffRosales2010}.

A fundamental difference between projection methods (see \cite{GuermondMinevShen2006} for an overview) and PPE reformulations is that projection methods are based on a fractional step approach in which the time-evolution of the velocity field and its projection onto the space of incompressible fields are alternated. In contrast, PPE reformulations derive an equation for the pressure that replaces the incompressibility constraint in \eqref{eq:NSE} by a global pressure function $p = P(\vec{u})$ that is designed so that the solutions of the PPE reformulation are identical to the solutions of the original NSE. As a consequence, numerical methods based on PPE reformulations are structurally easy to extend to high order accuracy in time. In addition, they do not suffer from poor spatial accuracy near boundaries. A difficulty of PPE reformulations is that the Poisson equation for the pressure can involve complicated expressions, whose interaction with the velocity field equation is not always easy to understand and analyze. Another important property of PPE reformulations is that, unlike the original NSE, they are also defined if the initial conditions are not incompressible, see \S\ref{subsec:PPE_EBC}.

%---------------------------------------------------------------------------
\subsection{PPE Reformulation with Electric Boundary Conditions}
\label{subsec:PPE_EBC}
%---------------------------------------------------------------------------
In this paper, we are concerned with the particular PPE reformulation of the NSE proposed in \cite{ShirokoffRosales2010}. Its fundamental difference from previously proposed PPE reformulations \cite{Henshaw1994, HenshawKreissReyna1994, JohnstonLiu2002} is that the velocity field satisfies electric boundary conditions, i.e., incompressibility and the tangential flow are prescribed at the boundary. In turn, the normal velocity is enforced via a relaxation term in the pressure equation (see \cite{ShirokoffRosales2010} for a discussion on the choice of $\lambda$). The PPE reformulation consists of the momentum equation
\begin{equation}
\label{eq:NSE_EBC}
\begin{cases}[r@{~}l@{\quad}l]
\partial_t\vec{u} + (\vec{u}\cdot\nabla)\vec{u}
&= -\nabla P(\vec{u}) + \nu\Delta\vec{u} + \vec{f} &\text{in~}\Omega\times (0,T) \\
\nabla\cdot\vec{u} &= 0 &\text{on~}\partial\Omega\times (0,T) \\
\vec{n}\times\vec{u} &= \vec{n}\times\vec{g} &\text{on~}\partial\Omega\times (0,T) \\
\vec{u} &= \mathring{\vec{u}} &\text{on~}\Omega\times\{t=0\}\;,
\end{cases}
\end{equation}
where $P(\vec{u})$ is a solution of the associated pressure Poisson equation
\begin{equation}
\label{eq:PPE_EBC}
\begin{cases}[r@{~}l@{\quad}l]
\Delta p &= \nabla\cdot (\vec{f} - (\vec{u}\cdot\nabla)\vec{u})
&\text{in~}\Omega \\
\frac{\partial p}{\partial \vec{n}}
&= \vec{n}\cdot (\vec{f} - \partial_t\vec{g}
+ \nu\Delta\vec{u} - (\vec{u}\cdot\nabla)\vec{u})
+ \lambda\vec{n}\cdot (\vec{u}-\vec{g}) &\text{on~}\partial\Omega\;.
\end{cases}
\end{equation}
While a variety of modifications and additions can be applied to these equations (cf.~\cite{ShirokoffRosales2010, ChidyagwaiRosalesSeiboldShirokoffZhou2013}), here we study the equations exactly in the given form (with one small caveat regarding the compatibility of the pressure boundary conditions, see below).

If the initial conditions are not incompressible, i.e., $\nabla\cdot\mathring{\vec{u}} \neq 0$, then the solution of \eqref{eq:NSE_EBC} relaxes towards a solutions of \eqref{eq:NSE}, for the following reason. Let $\phi = \nabla\cdot\vec{u}$. Then the application of $\nabla\cdot$ to the momentum equation in \eqref{eq:NSE}, and the use of the first equation in \eqref{eq:PPE_EBC} yields that $\phi$ satisfies the heat equation with homogeneous Dirichlet boundary conditions
\begin{equation}
\label{eq:PPE_heat_equation_div}
\begin{cases}[r@{~}l@{\quad}l]
\partial_t\phi &= \nu\Delta\phi &\text{in~}\Omega\times (0,T) \\
\phi &= 0 &\text{on~}\partial\Omega\times (0,T) \\
\phi &= \nabla\cdot\mathring{\vec{u}} &\text{on~}\Omega\times\{t=0\}\;.
\end{cases}
\end{equation}
This property is of great relevance. It means that in PPE reformulations, there is no need to impose a discrete incompressibility principle. If, due to numerical approximation errors, the numerical solution starts to drift away from the $\nabla\cdot\vec{u} = 0$ manifold, equation \eqref{eq:PPE_heat_equation_div} ensures that it is pulled back towards incompressibility.

At the same time, the fact that the numerical solution may not be exactly incompressible, implies that the compatibility condition in the pressure Poisson equation \eqref{eq:PPE_EBC} may be violated. Specifically, \eqref{eq:PPE_EBC} has a solution if
\begin{equation}
\label{eq:PPE_compatibility_condition}
\int_\Omega (\lambda+\nu\Delta)\phi\ud{x}
- \int_{\partial\Omega} (\partial_t+\lambda)\vec{g}\cdot\vec{n}\ud{S} = 0\;,
\end{equation}
and due to numerical approximation errors (or because a problem with $\nabla\cdot\mathring{\vec{u}}\neq 0$ is considered) this condition may be violated. However, whenever this occurs, the solution of the augmented system \eqref{eq:augmented_pressure_system}, described in \S\ref{subsec:PPE_meshfree_FD} projects the right hand side of \eqref{eq:PPE_EBC} in a way that the solvability condition is satisfied.

Below, we first generalize the meshfree finite difference methods developed in \S\ref{sec:vector_poisson_equation} for the vector Poisson equation to the vector heat equation (\S\ref{subsec:VHE_meshfree_FD}). Then, we extend the methodology to the PPE reformulation (\S\ref{subsec:PPE_meshfree_FD}).

%---------------------------------------------------------------------------
\subsection{Meshfree Finite Differences for the Vector Heat Equation}
\label{subsec:VHE_meshfree_FD}
%---------------------------------------------------------------------------
Before moving to the full PPE reformulation \eqref{eq:NSE_EBC}, we first generalize the numerical scheme developed in \S\ref{subsec:VPE_meshfree_FD} for the vector Poisson equation \eqref{eq:VPE_EBC} to the vector heat equation (VHE) that describes the evolution of a vector field $\vec{u}(x,t)$ via the system
\begin{equation}
\label{eq:VHE_EBC}
\begin{cases}[r@{~}l@{\quad}l]
\partial_t\vec{u} &= \nu\Delta\vec{u}+\vec{f} &\text{in~}\Omega\times (0,T) \\
\nabla\cdot\vec{u} &= 0 &\text{on~}\partial\Omega\times (0,T) \\
\vec{n}\times\vec{u} &= \vec{n}\times\vec{g} &\text{on~}\partial\Omega\times (0,T) \\
\vec{u} &= \mathring{\vec{u}} &\text{on~}\Omega\times\{t=0\}\;.
\end{cases}
\end{equation}
Here the forcing $\vec{f}(x,t)$ and the initial data $\mathring{\vec{u}}(x)$ are incompressible, i.e., $\nabla\cdot\vec{f} = 0$ and $\nabla\cdot\mathring{\vec{u}} = 0$. We discretize \eqref{eq:VHE_EBC} in time via ImEx (Implicit-Explicit) schemes \cite{AscherRuuthSpiteri1997}. Specifically, we use the first-order scheme
\begin{equation}
\label{eq:IMEX_first}
\tfrac{1}{\Delta t}\prn{\vec{u}(t+\Delta t)-\vec{u}(t)}
= \vec{R}(\vec{u}(t+\Delta t))+\vec{Q}(\vec{u}(t))
\end{equation}
and the second-order two-stage Runge-Kutta scheme
\begin{equation}
\label{eq:IMEX_second}
\begin{split}
\tfrac{1}{\Delta t}\prn{\vec{u}^*-\vec{u}(t)}
&= \gamma\vec{R}(\vec{u}^*)+\gamma\vec{Q}(\vec{u}(t)) \\
\tfrac{1}{\Delta t}\prn{\vec{u}(t+\Delta t)-\vec{u}(t)}
&= \gamma\vec{R}(\vec{u}(t+\Delta t))+(1-\gamma)\vec{R}(\vec{u}^*)
+\delta\vec{Q}(\vec{u}(t))+(1-\delta)\vec{Q}(\vec{u}^*)\;,
\end{split}
\end{equation}
where $\gamma = 1-\frac{1}{2}\sqrt{2}$ and $\delta = 1-\frac{1}{2\gamma}$. For the VHE \eqref{eq:VHE_EBC}, an explicit first-order scheme (forward Euler) is obtained by setting $\vec{R}(\vec{u}) = 0$ and $\vec{Q}(\vec{u}) = \nu\Delta\vec{u}+\vec{f}$ in \eqref{eq:IMEX_first}. In turn, setting $\vec{R}(\vec{u}) = \nu\Delta\vec{u}$ and $\vec{Q}(\vec{u}) = \vec{f}$ yields semi-implicit schemes of first order via \eqref{eq:IMEX_first}, and of second order via \eqref{eq:IMEX_second}. Moreover, the boundary conditions are always treated implicitly, so that they are satisfied by the new state (at the end of each Runge-Kutta stage).

The schemes that treat $\Delta\vec{u}$ implicitly lead to relatively simple modifications of the linear system \eqref{eq:VPE_linear_system} of the VPE \eqref{eq:VHE_EBC}. For instance, the first-order scheme,
\begin{equation*}
\begin{cases}[r@{~}l@{\quad}l]
\tfrac{1}{\Delta t}\prn{\vec{u}(t+\Delta t)-\vec{u}(t)}
&= \nu\Delta\vec{u}(t+\Delta t)+\vec{f}(t) &\text{in~}\Omega \\
\nabla\cdot\vec{u}(t+\Delta t) &= 0 &\text{on~}\partial\Omega \\
\vec{n}\times\vec{u}(t+\Delta t)
&= \vec{n}\times\vec{g}(t+\Delta t) &\text{on~}\partial\Omega \\
\vec{u}(0) &= \mathring{\vec{u}} &\text{in~}\Omega\;,
\end{cases}
\end{equation*}
amounts to the following modifications of system \eqref{eq:VPE_linear_system}: (i) The vector of unknowns becomes the new vector field at time $t+\Delta t$. (ii) In the right hand side vector, the function $g$ is evaluated at time $t+\Delta t$. (iii) In the top two block rows of the system matrix, multiply all entries by $\nu$ and add $1/\Delta t$ to the diagonal entries. (iv) Add $1/\Delta t$ times the solution at time $t$ to the top two blocks of the right hand side vector. Everything else remains unchanged.

In the forward Euler case, the update at the interior points becomes explicit, while the boundary conditions are still implicit. Hence, to advance the solution from time $t$ to $t+\Delta t$, one first updates at each interior point explicitly
\begin{equation*}
\vec{u}(x_i,t+\Delta t) = \vec{u}(x_i,t)
+ \Delta t\,\nu\!\sum_{j\in B_i} a_{ij} \vec{u}(x_j,t) + \Delta t\,\vec{f}(x_i,t)
\end{equation*}
according to \eqref{eq:stencil_linear_combination} and \eqref{eq:stencil_WLSQ}, and after that solves a small linear system for the boundary points that results from the two bottom block rows of system \eqref{eq:VPE_linear_system}, where the left two block columns are brought to the right hand side (using the just updated interior point values).

\begin{figure}
\begin{minipage}[b]{.32\textwidth}
\includegraphics[width=\textwidth]{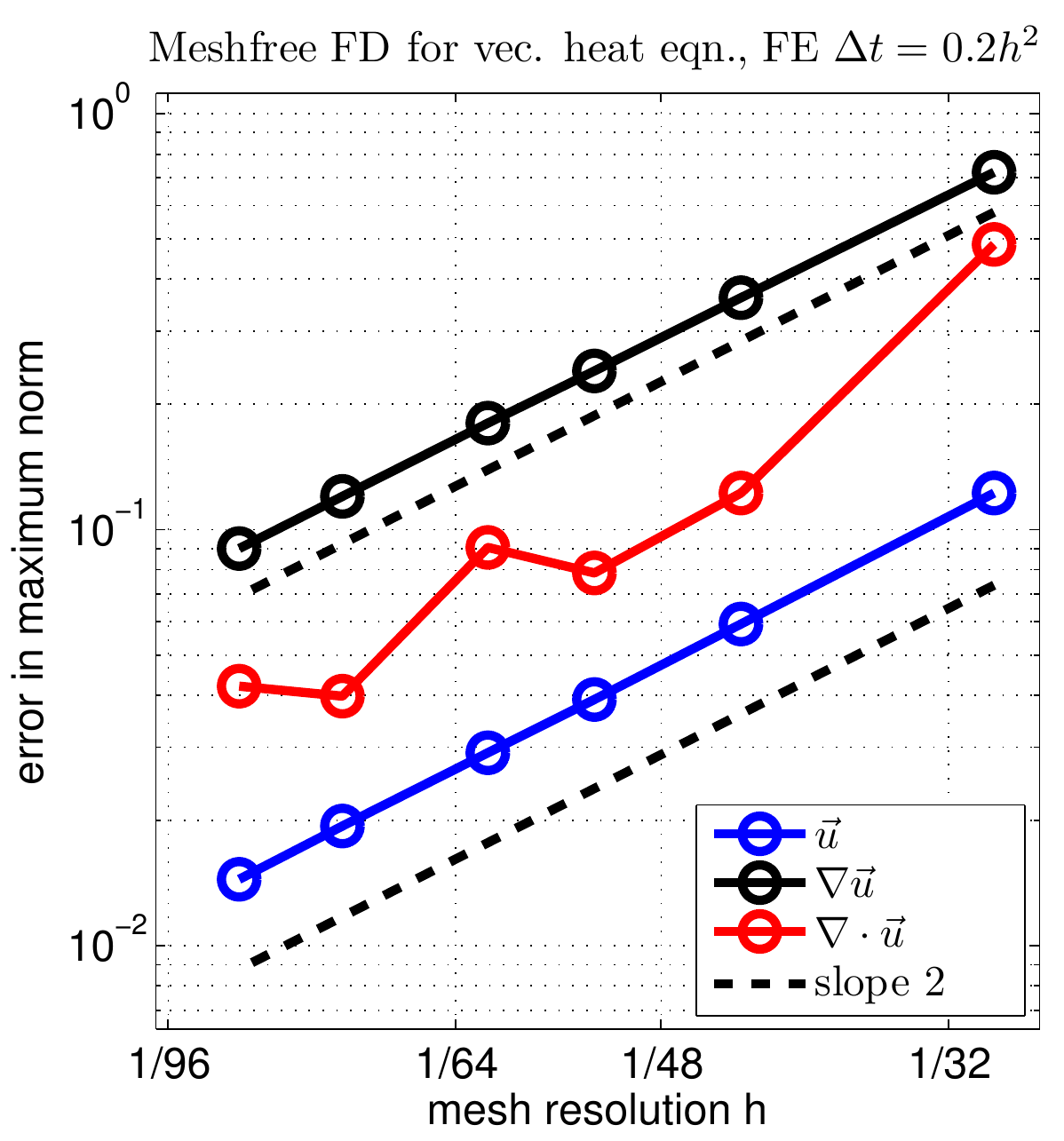}
\end{minipage}
\hfill
\begin{minipage}[b]{.32\textwidth}
\includegraphics[width=\textwidth]{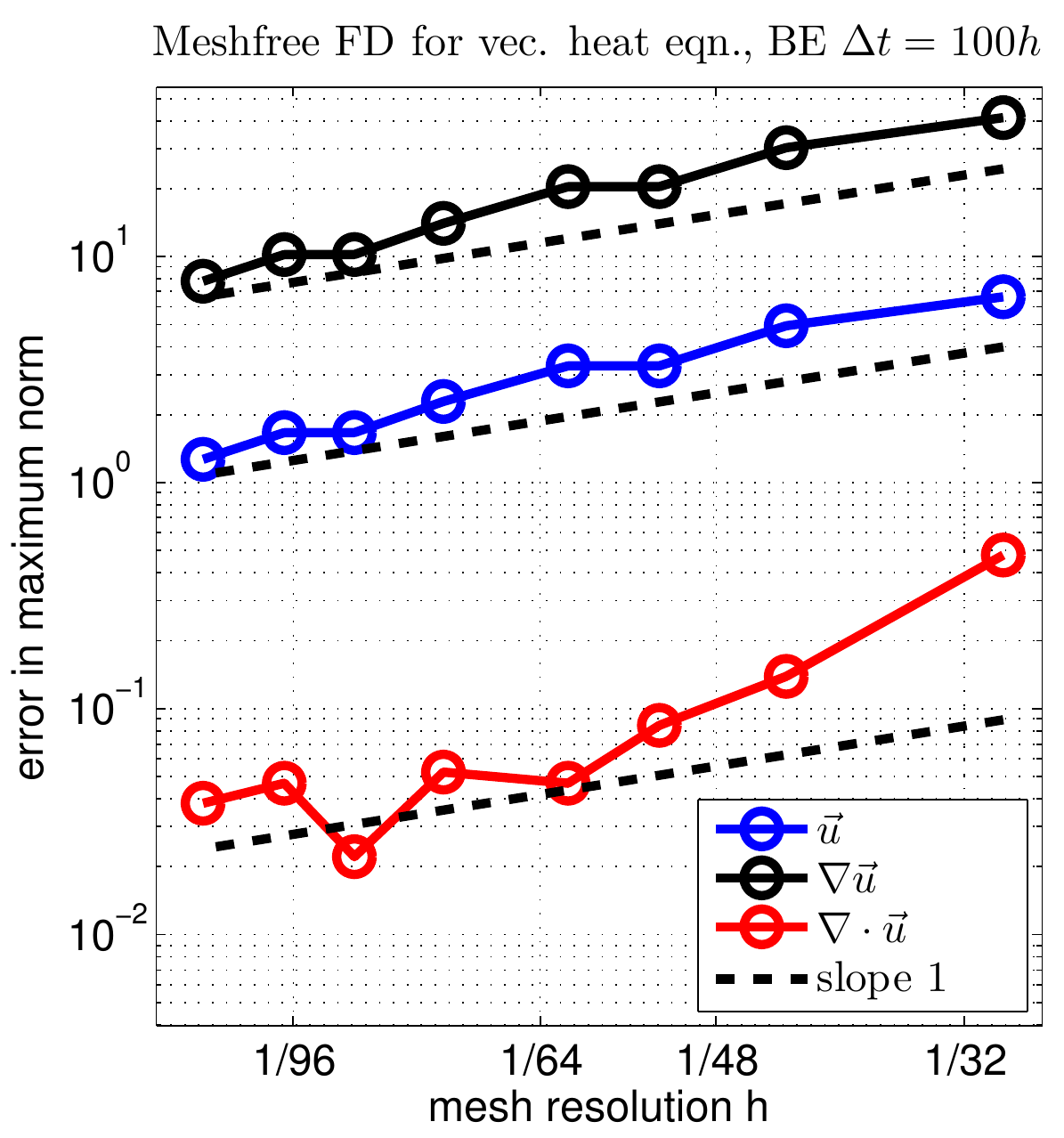}
\end{minipage}
\hfill
\begin{minipage}[b]{.32\textwidth}
\includegraphics[width=\textwidth]{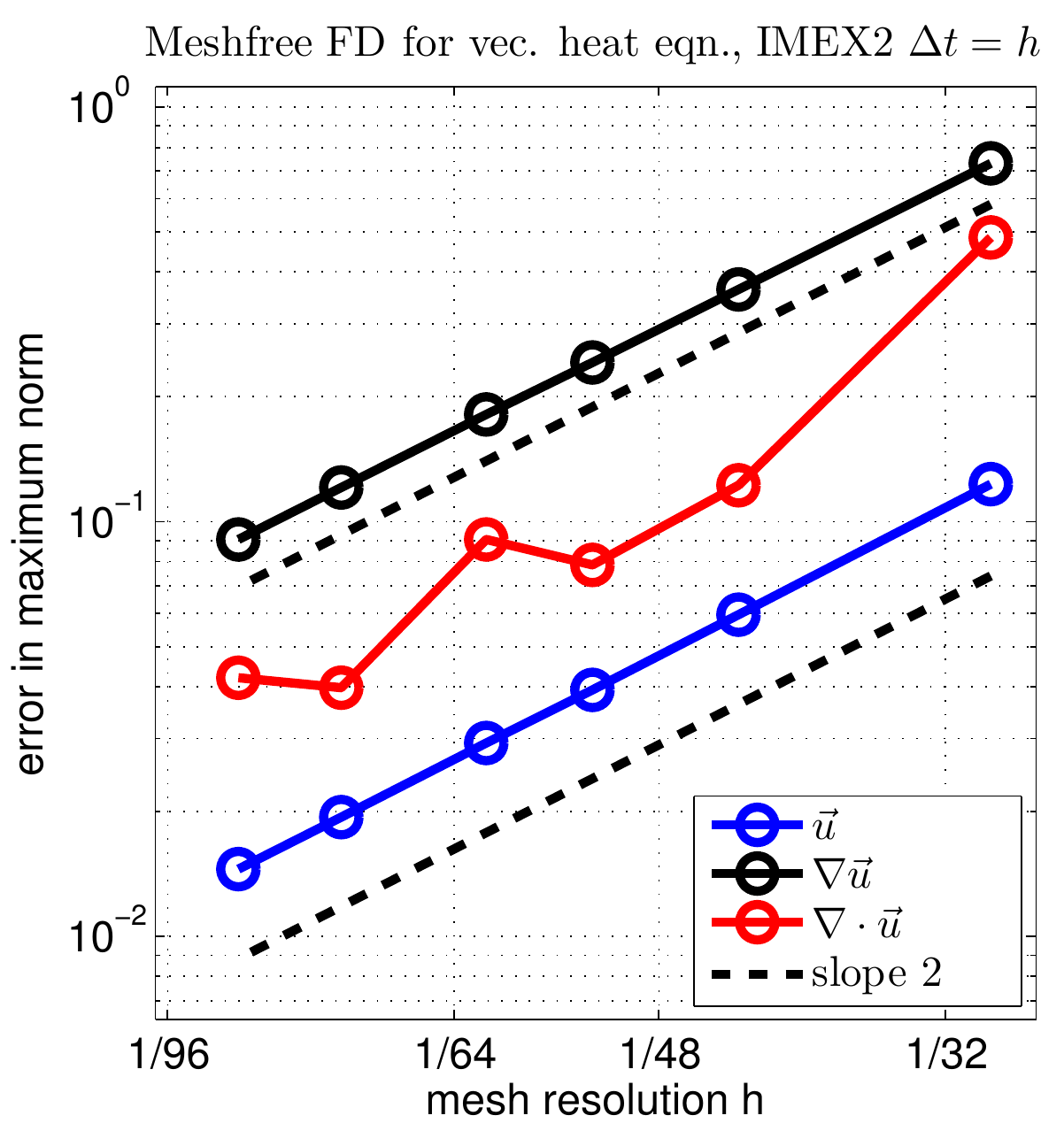}
\end{minipage}
\vspace{-.4em}
\caption{Error convergence for the vector heat equation, using a spatially second order meshfree finite difference discretization. Left panel: using forward Euler time stepping with $\Delta t = 0.2h^2$, confirming the expected $O(h^2)$ convergence order. Middle panel: using backward Euler time stepping with $\Delta t = 100h$, yielding the expected $O(h)$ convergence due to temporal errors. Right panel: using a second-order ImEx scheme with $\Delta t = h$, confirming the expected $O(h^2)$ convergence order.}
\label{fig:VHE_meshfree_solutions}
\end{figure}

To study these numerical schemes, we generalize the manufactured solution from \S\ref{subsubsec:FEM_manufactured_solution} to the time-dependent case. We set $\nu=1$, prescribe the incompressible solution
\begin{equation}
\label{eq:manufactured_solution_time}
\vec{u}(x,y,t) =
\begin{pmatrix}
\phantom{-}\pi\cos(t)\sin(2\pi y)\sin^2(\pi x) \\
-\pi\cos(t)\sin(2\pi x)\sin^2(\pi y)
\end{pmatrix}\,,
\end{equation}
and calculate the forcing $\vec{f} = \partial_t\vec{u}-\nu\Delta\vec{u}$, the boundary velocity $\vec{g} = \vec{u}$, and the initial conditions $\mathring{\vec{u}} = \vec{u}(t=0)$ accordingly. Using this test case, we first determine, via numerical experiments, the maximum time step that the forward Euler scheme admits to be stable. We find $\Delta t\le \frac{Ch^2}{\nu}$, where for the given point clouds, $C$ always lies between 0.2 and 0.3. Then, we study the convergence orders of the numerical schemes. The results are shown in Fig.~\ref{fig:VHE_meshfree_solutions}. We use a second order meshfree FD approximation in space, and conduct five kinds of time stepping: forward Euler and backward Euler with $\Delta t = 0.2h^2$ (left panel; the plots of forward vs.\ backward Euler are indistinguishable); backward Euler with $\Delta t = h$ (not shown; due to very small temporal errors, the convergence looks like second order); backward Euler with $\Delta t = 100h$ (middle panel; the temporal error is visible and yields the expected drop to first order); and the second-order ImEx scheme \eqref{eq:IMEX_second} (right panel). From these results, we can see that the solution, and its derivatives, are in fact second-order accurate in space. Moreover, explicit and implicit time stepping (of first and second order) can be conducted without problems; and the temporal errors are relatively small.

%---------------------------------------------------------------------------
\subsection{Meshfree Finite Differences for the PPE Reformulation}
\label{subsec:PPE_meshfree_FD}
%---------------------------------------------------------------------------
Structurally the PPE reformulation \eqref{eq:NSE_EBC} is the same as the vector heat equation \eqref{eq:VHE_EBC}, ``just'' with the nonlinear term $\vec{N}(\vec{u}) = (\vec{u}\cdot\nabla)\vec{u}$ and the pressure term $\nabla P(\vec{u})$ added to the time evolution. We treat both of these terms, as well as the forcing, explicitly. The first-order ImEx time-stepping \eqref{eq:IMEX_first} gives rise to the update rule
\begin{equation}
\label{eq:PPE_time_stepping}
\tfrac{1}{\Delta t}\prn{\vec{u}(t+\Delta t)-\vec{u}(t)}
= -\vec{N}(\vec{u}(t)) - \nabla P(\vec{u}(t)) + \nu\Delta\vec{u}(t+\theta\Delta t) + \vec{f}(t)\;,
\end{equation}
where $\theta\in\{0,1\}$ allows to switch between an explicit/implicit treatment of viscosity. In the forward Euler case ($\theta=0$), stability requires $\Delta t = O(h^2)$, and thus the scheme's accuracy is $O(h^2)$ overall. In the semi-implicit case ($\theta=1$), one can choose larger time steps, and consequently the temporal accuracy is not sufficient. We therefore use instead the second-order ImEx time-stepping \eqref{eq:IMEX_second} with $\vec{R}(\vec{u}) = \nu\Delta\vec{u}$ and $\vec{Q}(\vec{u}) = \vec{f}-\vec{N}(\vec{u})-\nabla P(\vec{u})$, which allows for time steps $\Delta t = O(h)$ and yields an $O(h^2)$ accurate scheme.

The Jacobi matrix $\nabla\vec{u}$ needed for the nonlinear terms is approximated very simply via point-centered meshfree finite differences, via the methodology described in \S\ref{subsec:VPE_meshfree_FD}. Clearly, such a centered treatment of advection is not the most effective choice for high Reynolds numbers (i.e., $\nu\ll 1$). And in fact, meshfree FD are quite easily amenable to an upwind treatment (e.g., by centering the approximation around a position $x_i-\beta\vec{u}(x_i)$, where $\beta$ is a suitably chosen parameter). However, for the purpose of demonstrating the convergence of meshfree FD methods for the PPE reformulation \eqref{eq:NSE_EBC}, the central treatment of $\vec{N}(\vec{u})$ is sufficient.

The pressure $P(\vec{u})$ results from the solution of the pressure Poisson equation \eqref{eq:PPE_EBC}. We discretize this problem via the same meshfree FD method described in \S\ref{subsec:VPE_meshfree_FD}, with one important deviation from the standard procedure. The right hand side of the boundary conditions in \eqref{eq:PPE_EBC} requires the evaluation of $\Delta\vec{u}$ at the boundary $\partial\Omega$. While straightforward meshfree FD for $\Delta\vec{u}$ yield accurate approximations inside the domain, it turns out that low accuracy (in the form of bounded but noticeable spatial oscillations along $\partial\Omega$) is achieved when using the same procedure at a boundary point. The reason is that the Laplacian is an operator that naturally ``likes'' to use data around the approximation point (cf.~\cite{Seibold2008}); however, at the boundary, data in such a configuration is not accessible. We therefore employ a different approach that remedies the problem: we use the meshfree approximation of $\vec{w} = \Delta\vec{u}$ at the interior points (as calculated for the viscosity), and extrapolate this field $\vec{w}$ to the boundary points, using moving least squares (MLS) interpolation \cite{LancasterSalkauskas1981}. This aspect is visualized in Fig.~\ref{fig:Laplacian_error_function}: the black dots are the approximation errors when approximating $\Delta\vec{u}$ at $\partial\Omega$ via meshfree FD; the red dots are the errors obtained when using MLS interpolation.

Since \eqref{eq:PPE_EBC} is a Neumann problem, its discretization leads to a linear system $A\cdot p = r$, in which the Poisson matrix $A$ has corank 1. In fact, because in the meshfree FD expansion \eqref{eq:stencil_linear_combination} the first term must vanish for any differential operator, the kernel of $A$ is $e = (1,\dots,1)^T$. In line with the approach described in \cite{Henshaw1994}, we solve the augmented system
\begin{equation}
\label{eq:augmented_pressure_system}
\begin{pmatrix} A & e \\ e^{T} & 0 \end{pmatrix}
\cdot \begin{pmatrix} p \\ \alpha \end{pmatrix}
= \begin{pmatrix} r \\ 0 \end{pmatrix}\;,
\end{equation}
whose unique solution is the one satisfying $A\cdot p = r-\alpha e$, where the new right hand side is the projection of $r$ onto the range of $A$. Moreover, a unique solution is singled out by the condition $e^T\cdot p = 0$. This approach in particular addresses the possibility that the PPE compatibility condition \eqref{eq:PPE_compatibility_condition} may be not satisfied exactly. The gradient of the resulting pressure is then approximated via standard meshfree FD at all interior points.

\begin{figure}
\begin{minipage}[t]{.32\textwidth}
\includegraphics[width=1.01\textwidth]{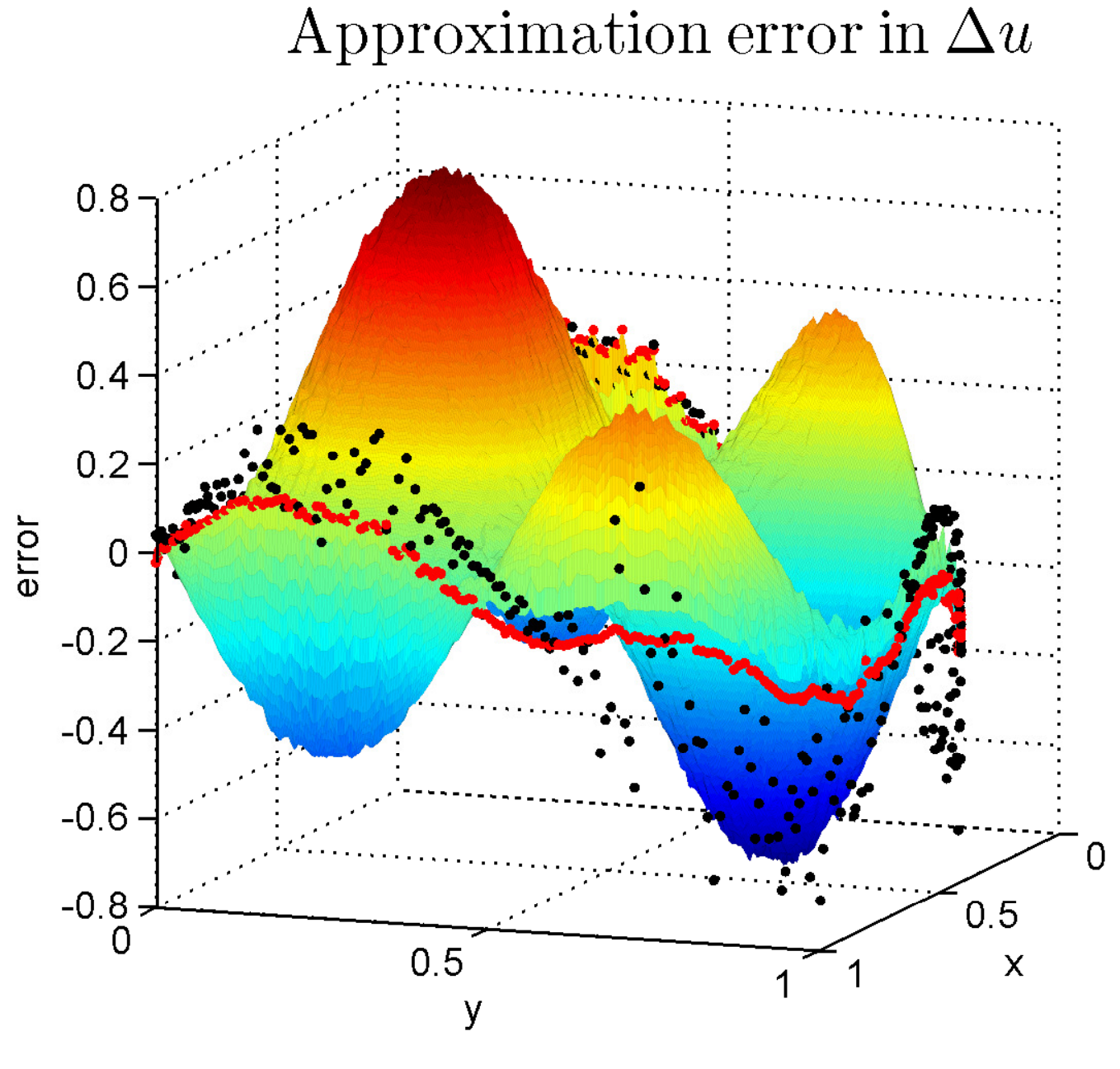}
\vspace{-1.4em}
\caption{Error function when approximating $\Delta u$ using meshfree FD. Black dots: use FD directly at boundary points. Red dots: extrapolate function $\Delta u$ from interior to the boundary.}
\label{fig:Laplacian_error_function}
\end{minipage}
\hfill
\begin{minipage}[t]{.63\textwidth}
\includegraphics[width=.495\textwidth]{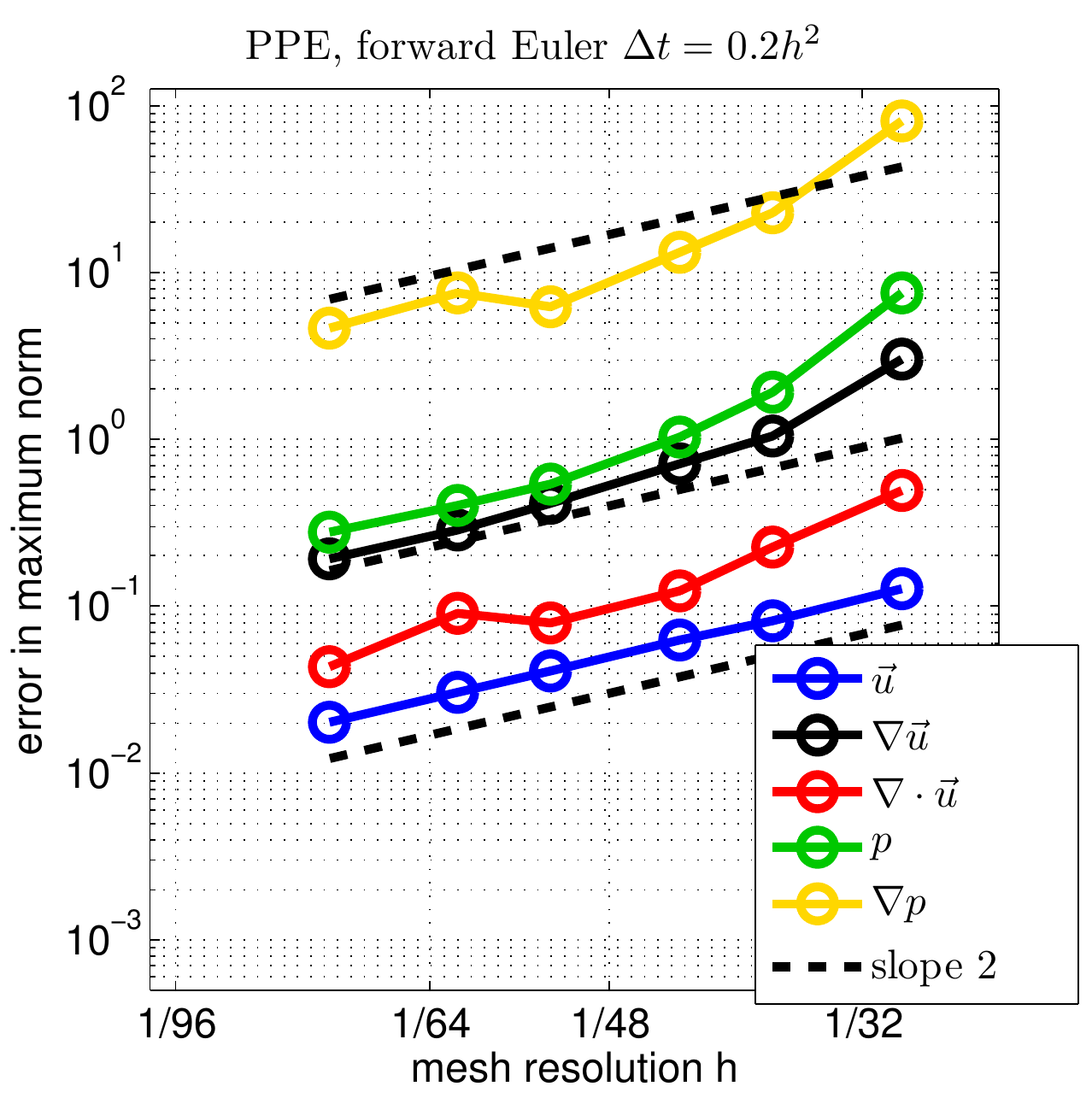}
\hfill
\includegraphics[width=.495\textwidth]{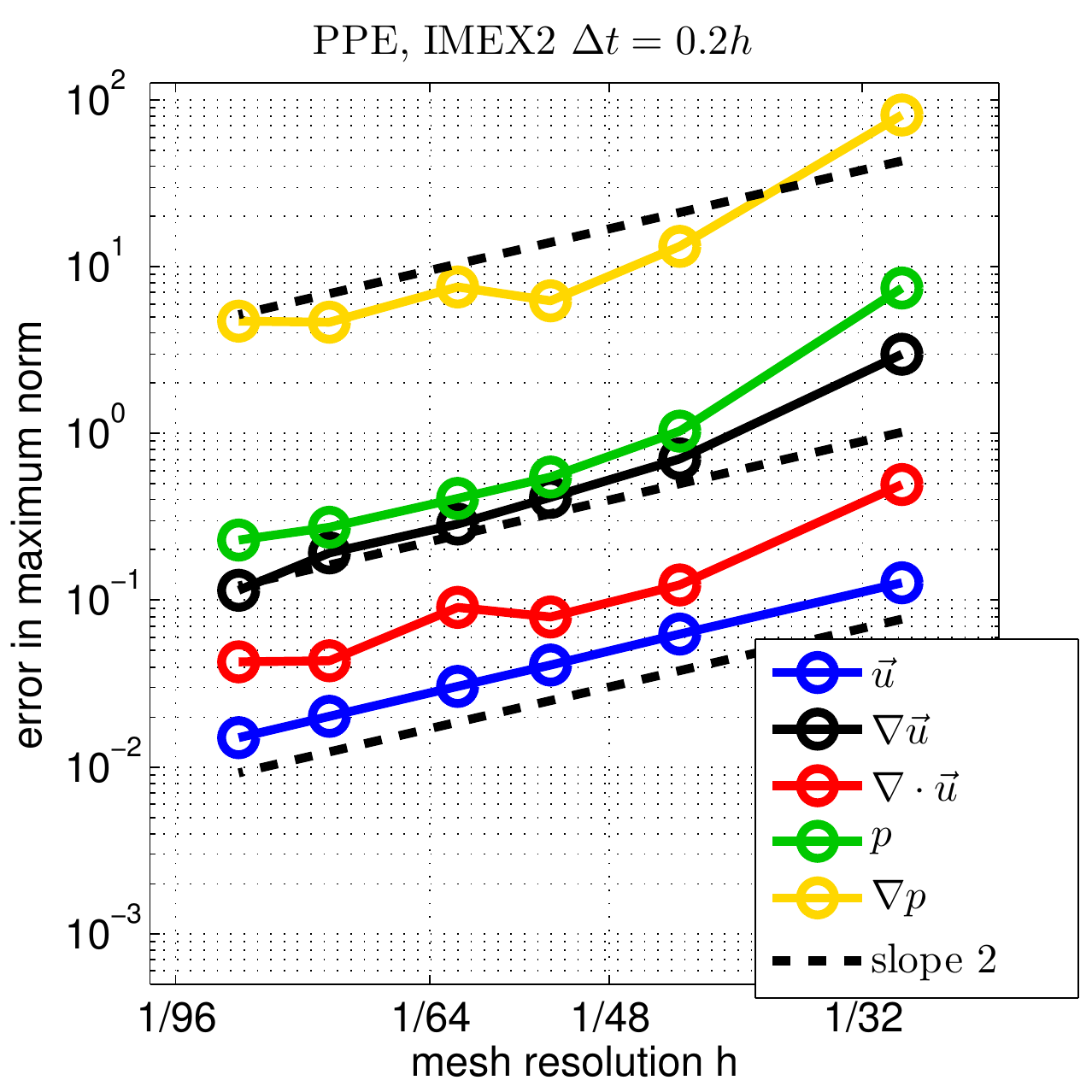}
\vspace{-1.4em}
\caption{Error convergence of the velocity field, the pressure, and their derivatives, for the PPE reformulation of the NSE. A spatially second order meshfree FD discretization is used. Left panel: forward Euler time stepping (with $\Delta t = 0.2h^2$). Right panel: ImEx2 time stepping (with $\Delta t = 0.2h$). In both cases, the expected $O(h^2)$ convergence is confirmed.}
\label{fig:PPE_meshfree_solutions}
\end{minipage}
\end{figure}

The maximum admissible time step of the numerical scheme is determined by the viscosity term in the explicit case, i.e., $\Delta t = O(h^2)$, and by the nonlinear advection term in the semi-implicit case, i.e., $\Delta t = O(h)$. These stability time step restrictions are in line with those observed for the numerical method presented in \cite{ShirokoffRosales2010}. However, they are different from the phenomenon observed and analyzed in \cite{Petersson2001, HenshawPetersson2003} for a different PPE reformulation and a different numerical discretization. In that study, the parabolic scaling $\Delta t = O(h^2)$ is observed to be required for stability, even if viscosity is treated implicitly.

%---------------------------------------------------------------------------
\subsection{Numerical Results}
\label{subsec:PPE_numerical_results}
%---------------------------------------------------------------------------
In order to investigate the convergence of the numerical scheme developed in \S\ref{subsec:PPE_meshfree_FD}, we use the same manufactured solution \eqref{eq:manufactured_solution_time} as for the VHE, set $\nu = 1$, and calculate the pressure $p(x,y,t) = -\cos(t)\cos(\pi x)\sin(\pi y)$, the forcing $\vec{f} = \partial_t\vec{u}+(\vec{u}\cdot\nabla)\vec{u}+\nabla P(\vec{u})-\nu\Delta\vec{u}$, and the boundary velocity $\vec{g} = \vec{u}$ accordingly. We use a spatially second order meshfree FD scheme (with the special treatment of $\Delta\vec{u}|_{\partial\Omega}$, see \S\ref{subsec:PPE_meshfree_FD}), and two types of time stepping: a) forward Euler with $\Delta t = 0.2h^2$; and second-order ImEx with $\Delta t = 0.2h$. In all cases the boundary relaxation value is chosen $\lambda=30$. The numerical results, shown in Fig.~\ref{fig:PPE_meshfree_solutions}, demonstrate that we obtain an overall second order convergence rate for all quantities of interest: the velocity field, its gradient, the divergence, the pressure, and the pressure gradient.

\begin{figure}
\begin{minipage}[b]{.485\textwidth}
\includegraphics[width=\textwidth]{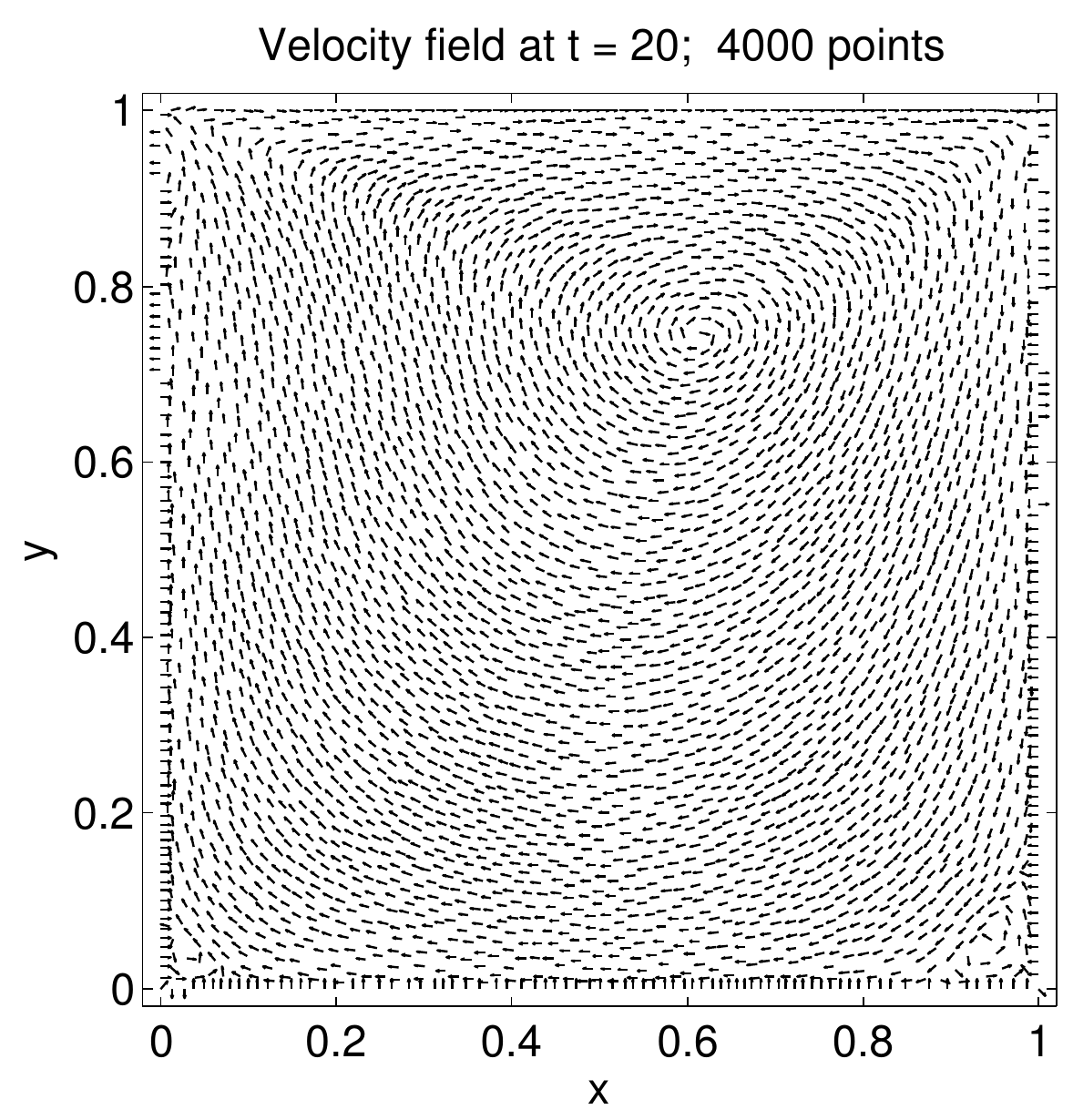}
\end{minipage}
\hfill
\begin{minipage}[b]{.495\textwidth}
\includegraphics[width=\textwidth]{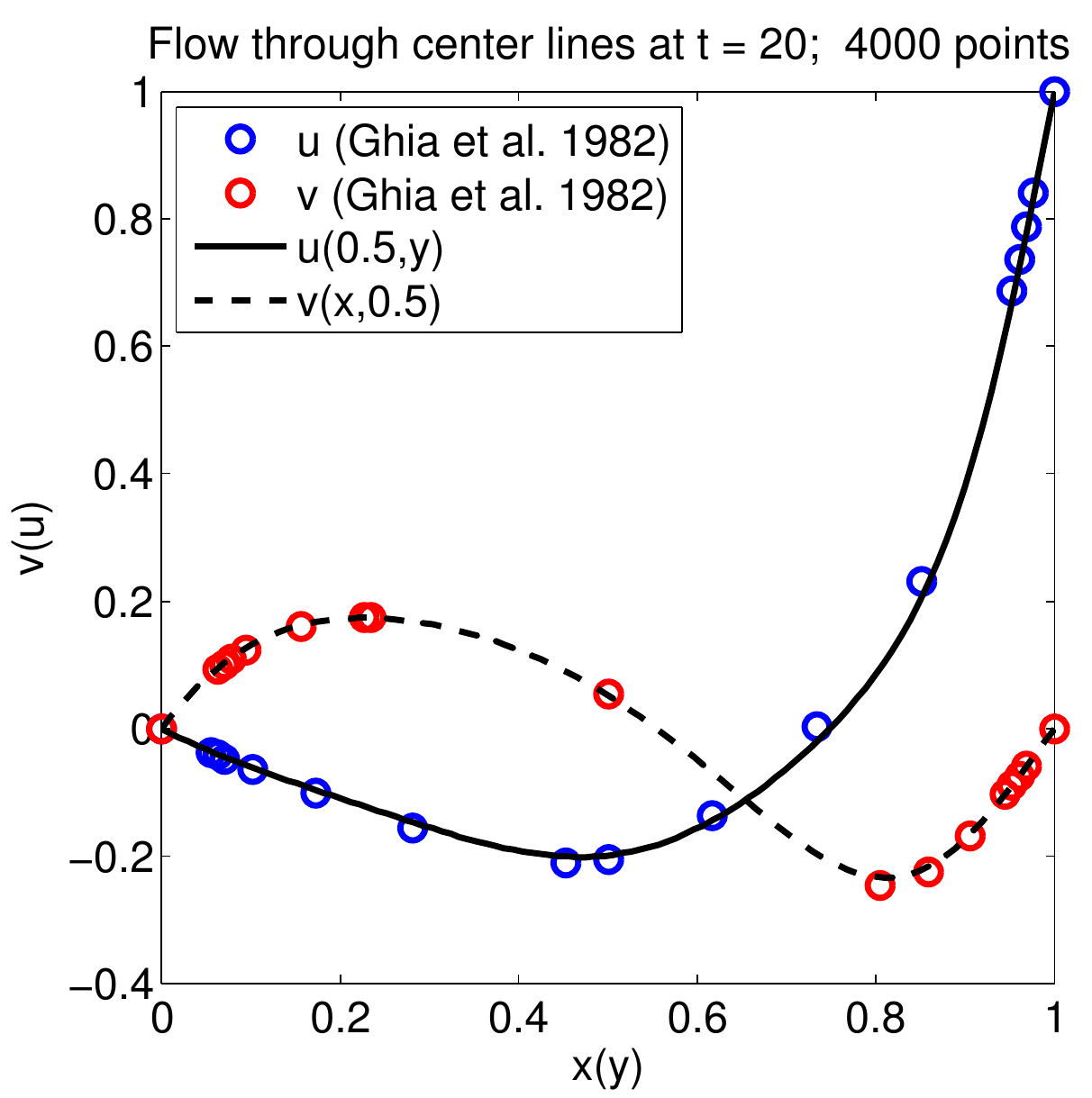}
\end{minipage}
\vspace{-.4em}
\caption{Lid-driven cavity test with Re=100 using a second-order meshfree scheme with 4000 points. Left: normalized velocity field at time $T=20$. Right: plot of the flow normal to the two centerlines of the cavity compared with reference data \cite{GhiaGhiaShin1982}.}
\label{fig:PPE_lid_driven_cavity}
\end{figure}

Moreover, to demonstrate the applicability of the numerical methodology, we conduct the standard benchmark lid-driven cavity test \cite{Burggraf1966} for Reynolds number 100, i.e., $\nu=0.01$. On the domain $\Omega = (0,1)^2$, the velocity field is zero at the boundaries, except for the tangential velocity at $y=1$, which is 1. The initial velocity field is zero everywhere except for $y=1$, where it equals the boundary condition. The numerical approach used here is the same forward Euler-based scheme as in the manufactured solution test. The results of the steady-state profile ($t=20$) are shown in Fig.~\ref{fig:PPE_lid_driven_cavity}. The velocity field (left panel) is depicted in the form of normalized vectors at the approximation points. The large center vortex and the two vortices in the bottom corners are captured. Moreover, a comparison (right panel) of the velocity through the centerlines of the cavity with reference data \cite{GhiaGhiaShin1982} shows a good agreement even on this not very highly resolved point cloud. Note that one particularity of the PPE reformulation \eqref{eq:NSE_EBC} is that the flow through the boundary need not necessarily be exactly zero due to numerical approximation errors. This is why a flow through the boundaries is visible in the scaled quiver plot. However, the actual flow through the boundary is very small.

%=============================================================================================
\vspace{1.5em}
\section{Conclusions and Outlook}
\label{sec:conclusions_outlook}
%=============================================================================================
The results in this paper show that meshfree finite differences (FD) provide a relatively straightforward methodology to approximate the solutions of vector-valued elliptic, parabolic, and fluid flow problems with electric boundary conditions (EBC), on domains without re-entrant corners. This is in contrast to finite element methods (FEM), whose simplest version, nodal-based FEM on triangular elements, fails at generating the correct solution. Instead, a Babu{\v s}ka paradox arises, which is shown to arise from the fact that nodal-based FEM do not capture the domain boundary's curvature.

For the vector Poisson equation, meshfree FD lead to a linear system that discretizes the Laplace operator at interior points, and the divergence operator at boundary points, in a natural and very systematic fashion. The same methodology is shown to yield first, second, and third order convergent numerical schemes. Analogous statements hold for the vector-heat equation. Implicit time stepping is a straightforward extension of the vector Poisson case; explicit time stepping is a bit more interesting (because boundary conditions remain implicit), but poses no conceptual complication.

The extension of the methods to a PPE reformulation of the Navier-Stokes equation with EBC is, again, conceptually not complicated. There is one challenge that must be overcome, namely the approximation of the Laplacian of the velocity field at the domain boundary. Once this issue has been addressed, a second-order accurate numerical scheme is obtained in which the pressure solve and the viscosity solve are decoupled. As a consequence, one can choose between an explicit and an implicit treatment of viscosity.

Being FD approaches, the numerical schemes yield the values of the velocity field $\vec{u}$ at the approximation points only. However, meshfree stencils can be employed to also calculate spatial derivatives of the velocity field. For instance, the velocity gradient $\nabla\vec{u}$ is crucial in computing forces and stresses acting on the boundary. Our investigation of the accuracy of these derivative quantities reveals that, for all studied problems, they show no degradation in order: a $k^\text{th}$ order scheme yields $k^\text{th}$ order convergence in $\vec{u}$, and also in $\nabla\vec{u}$.

While the results demonstrate the potential of meshfree FD for these types of problems, they also give rise to further questions. One important question is whether domains with re-entrant corners can also be treated. At first glance, one would think ``no'', because of the lack of smoothness of the solutions on such domains. However, FD methods are known to be able to yield correct answers even for certain problems that lack smoothness (such as hyperbolic conservation laws \cite{LeVeque1992}). This work also gives rise to a number of questions regarding the high-order accurate meshfree FD approximations. First, extensions of the numerical schemes for the PPE reformulation to convergence orders higher than two are of interest. Second, to avoid excessively large stencils for higher approximation orders, it is of interest whether the numerical schemes would work equally well (or better) if the meshfree FD approximations were obtained in a different fashion, such as via radial basis functions, compact FD, or deferred correction.

%=============================================================================================
\vspace{1.0em}
\section*{Acknowledgments}
%=============================================================================================
P. Chidyagwai, R. R. Rosales, B. Seibold, and D. Zhou wish to acknowledge support by the National Science Foundation. This work was supported through grants DMS--1115269 and DMS-1115278. Furthermore, R. R. Rosales would like to acknowledge partial support by NSF through grant DMS--1318942, and B. Seibold would like to acknowledge partial support by NSF through grants DMS--1318641 and DMS--1318709. D. Shirokoff acknowledges partial funding by NSERC.

%=============================================================================================
\bibliographystyle{plain}
\bibliography{references_complete}
%=============================================================================================

\end{document}